\documentclass[11pt]{article}
\usepackage{amsmath,amsthm}
\usepackage{amssymb,mathrsfs}
\usepackage{bm,mathtools}
\usepackage{graphics}
\usepackage{tikz}
\usepackage{float}
\usepackage{booktabs}
\usepackage{array}	
\usetikzlibrary{calc}
\usetikzlibrary{shapes.geometric}
\usepackage{xcolor}

\usepackage{geometry}
\geometry{left=30mm,right=30mm,
top=30mm,bottom=30mm}

\usepackage[colorlinks=true,
linkcolor=blue,citecolor=blue,pagebackref,
urlcolor=blue]{hyperref}
\usepackage{booktabs}
\usepackage{array}

\makeatletter
\def\@seccntDot{.}
\def\@seccntformat#1{\csname the#1\endcsname\@seccntDot\hskip 0.5em}
\renewcommand\section{\@startsection{section}{1}{\z@}%
{18\p@ \@plus 6\p@ \@minus 3\p@}%
{9\p@ \@plus 6\p@ \@minus 3\p@}%
{\large\bfseries\boldmath}}
\renewcommand\subsection{\@startsection{subsection}{2}{\z@}%
{12\p@ \@plus 6\p@ \@minus 3\p@}%
{3\p@ \@plus 6\p@ \@minus 3\p@}%
{\bfseries\boldmath}}
\renewcommand\subsubsection{\@startsection{subsubsection}{3}{\z@}%
{12\p@ \@plus 6\p@ \@minus 3\p@}%
{\p@}%
{\bfseries\boldmath}}
\makeatother

\usepackage{microtype}
\usepackage{float}

\theoremstyle{plain}
\newtheorem{theorem}{Theorem}
\newtheorem{lemma}{Lemma}

\newtheorem{conj}{Conjecture}

\theoremstyle{definition}
\newtheorem{definition}{Definition}[section]
\newtheorem{remark}{Remark}

\newtheorem{claim}{Claim}


\numberwithin{equation}{section}
\allowdisplaybreaks
\parindent=18pt

\title{Two conjectures on vertex-disjoint rainbow triangles}

\author{Xu Liu\footnote{College of Computer Science, Nankai University, Tianjin 300350, P.R. China.},~~~Bo Ning\footnote{Corresponding author. College of Computer Science, Nankai University, Tianjin 300350, P.R. China.
E-mail: \texttt{bo.ning@nankai.edu.cn} (B. Ning). Partially supported by the National Nature Science
Foundation of China (No. 12371350) and Fundamental Research Funds for the Central Universities, Nankai University (No. 63243151).},~~~Yuting Tian\footnote{College of Cryptology and Cyber Science, Nankai University, Tianjin 300350, P.R. China.}}

\date{}

\begin{document}
\maketitle	

\begin{abstract}
In 1963, Dirac proved that every $n$-vertex graph has $k$ vertex-disjoint triangles if $n\geq 3k$ and minimum degree $\delta(G)\geq \frac{n+k}{2}$.
The base case $n=3k$ can be reduced to the Corr\'adi-Hajn\'al Theorem.
Towards a rainbow version of Dirac's Theorem, Hu, Li, and Yang conjectured that for all positive integers $n$ and $k$ with $n\geq 3k$, every edge-colored graph $G$ of order $n$ with $\delta^c(G)\geq \frac{n+k}{2}$ contains $k$ vertex-disjoint rainbow triangles. 
In another direction, Wu et al. conjectured an exact formula for anti-Ramsey number $ar(n,kC_3)$, generalizing the earlier work of Erd\H{o}s, S\'os and Simonovits. The conjecture of Hu, Li, and Yang was confirmed for the cases $k=1$ and $k=2$. However, Lo and Williams disproved the conjecture when $n\leq \frac{17k}{5}.$ It is therefore natural to ask whether the conjecture holds for $n=\Omega(k)$.

In this paper, we confirm this by showing that the Hu-Li-Yang conjecture holds when $n\ge 42.5k+48$. We disprove the conjecture of Wu et al. and propose a modified conjecture.
This conjecture is motivated by previous works due to Allen, B\"{o}ttcher, Hladk\'{y}, and Piguet on Tur\'an number of vertex-disjoint triangles.
\end{abstract}

\section{Introduction}\label{sec1}

An edge-coloring of a graph $G$ is a mapping $f:E(G)\rightarrow \mathbb{N}$, where $\mathbb{N}$ is the set of natural numbers.  An edge-colored subgraph $H$ of $G$ is called rainbow if the edges of $H$ are assigned pairwise different colors.
Given a positive integer $n$ and a graph $H$, the \emph{anti-Ramsey number} $ar(n,H)$ is the maximum number of colors in an edge-coloring of $K_n$ such that it contains no rainbow $H$ as a subgraph.  For a vertex $v\in V(G)$, the 
\emph{color degree} of $v$, denoted by $d^c(v)$, is defined to be the number of different colors which are assigned to all edges incident to $v$. The \emph{minimum color degree}, denoted by $\delta^c(G)$, is defined to be $\min\{d^c(v):v\in V(G)\}$. The \emph{color number} of $G$, denoted by $c(G)$, is defined to be the size of $\{c(e):e\in E(G)\}$.
The \emph{Tur\'an number} $ex(n, H)$ is the maximum number of edges of a graph on $n$ vertices that contains no subgraph isomorphic to $H$.  

The main purpose of this paper is to study the following two conjectures on vertex-disjoint rainbow triangles.  In fact, we disprove the first conjecture and confirm the second conjecture when $n=\Omega(k)$.

\begin{conj}[\cite{WZLX2023}]\label{Conj:1}
\begin{align*}
 &ar(n,kC_3)= \max \left\{
\binom{3k-1}{2}+n-3k+1,\
\left\lfloor\frac{(n-k+2)^2}{4}\right\rfloor+(k-2)(n-k+2)+\binom{k-2}{2}+1
	\right\}   
\end{align*}
for all $n \geq 3k.$
\end{conj}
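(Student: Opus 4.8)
The plan is to prove matching lower and upper bounds, the upper bound by induction on $k$. For the lower bound it suffices to exhibit two edge-colourings of $K_n$ with no $k$ vertex-disjoint rainbow triangles attaining the two quantities inside the maximum. For the first, fix $S\subseteq V(K_n)$ with $|S|=3k-1$, colour $K_n[S]$ rainbow, order the remaining vertices $v_1,\dots,v_{n-3k+1}$, and give every edge joining $v_i$ to $S\cup\{v_1,\dots,v_{i-1}\}$ a single new colour $\chi_i$; any triangle meeting $V(K_n)\setminus S$ then repeats a colour, so all rainbow triangles lie in $S$, at most $\lfloor(3k-1)/3\rfloor=k-1$ of them are disjoint, and the colour count is $\binom{3k-1}{2}+(n-3k+1)$. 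For the second, partition $V(K_n)=A\cup B\cup C$ with $|A|=k-2$ and $\bigl||B|-|C|\bigr|\le 1$, colour all edges inside $A$, all edges between $A$ and $B\cup C$, and all edges between $B$ and $C$ rainbow, and give every edge inside $B$ or inside $C$ one extra colour; a rainbow triangle must meet $A$ or use an edge inside $B$ or inside $C$, two vertex-disjoint triangles of the latter type would require two distinct ``extra'' colours, so again at most $(k-2)+1=k-1$ disjoint rainbow triangles exist, with colour count $\lfloor(n-k+2)^2/4\rfloor+(k-2)(n-k+2)+\binom{k-2}{2}+1$.

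\emph{Upper bound.} Suppose $c$ colours $K_n$ with more colours than the displayed maximum; I want $k$ disjoint rainbow triangles. The base cases $k=1$ (where $ar(n,C_3)=n-1$, the Erd\H{o}s--Simonovits--S\'os equality) and $k=2$ are handled directly. For the inductive step the aim is to extract one rainbow triangle $T$ such that deleting $V(T)$ destroys few enough colours that the induced colouring of $K_n-V(T)$ still exceeds the conjectured value for the parameters $(k-1,n-3)$; by induction $K_n-V(T)$ then contains $k-1$ disjoint rainbow triangles, and with $T$ we are done. A short computation shows the permitted colour loss equals $9(k-1)$ when the first term dominates and is of order $n$ when the quadratic term dominates, the two regimes crossing at $n=\Theta(k)$.

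\emph{The core.} Fix a subgraph $R\subseteq K_n$ having exactly as many edges as $c$ has colours, one edge of each colour. Call a colour \emph{private} to a vertex set $X$ if its entire colour class meets $X$; only private colours are lost when $X$ is deleted, and a colour whose class contains a matching of size $4$ is never private to a $3$-set. The heart of the argument is a stability dichotomy. Either many colour classes are ``spread'' (contain several pairwise-disjoint edges), and then one can pick $T$ on three vertices of small private-colour count --- bounding the loss by averaging the remaining ``concentrated'' colours over the chosen vertices --- and run the induction on $K_n-V(T)$; or most colours are concentrated on a bounded set of vertices, and then $R$ restricted to the complement is sparse and the colouring is forced towards the first construction, whose colour count it cannot beat. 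The recurring quantitative lever is that if a maximal family of $t<k$ disjoint rainbow triangles uses a vertex set $W$, then $K_n-W$ has no rainbow triangle and hence carries at most $n-3t-1$ colours, together with a bound on how many colours on $W$-to-rest edges can be genuinely new.

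\emph{The main obstacle.} The genuinely delicate point --- where the whole scheme, and with it Conjecture~\ref{Conj:1}, may fail --- is the middle range $n=\Theta(k)$: there neither displayed quantity clearly dominates, and one must prove that \emph{no} colouring avoiding $k$ disjoint rainbow triangles beats both of them. The natural threat is a hybrid colouring: a rainbow clique on slightly more than $3(k-1)$ vertices glued to a rainbow Tur\'an-type join on the remaining vertices, with one global leftover colour. If for some $n$ in this range such a hybrid carries more colours than both displayed quantities, the stability step collapses and the claimed identity fails precisely there. Thus the real work --- and the true test of the conjecture --- is to rule out (or, as the case may be, to exhibit) these hybrid colourings and to make the mid-range case analysis airtight.
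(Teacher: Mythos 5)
Your proposal sets out to \emph{prove} Conjecture~\ref{Conj:1}, but the statement is false, and the paper's treatment of it is a disproof (Theorem~\ref{thm:1}, Section~\ref{Sec:2}). Your two lower-bound colourings are exactly the paper's $G_4(n,k)$ and $G_1(n,k)$ and are correct, but no matching upper bound exists in the middle range: for $n$ roughly between $3.25k$ and $4.5k$ the paper exhibits colourings with no rainbow $kC_3$ that beat \emph{both} terms of the conjectured maximum. The decisive construction is $G_3(n,k)$: take $X$ with $|X|=2k-3$, colour rainbow all edges inside $X$ and all edges between $X$ and $Y=V\setminus X$, and give every edge inside $Y$ one further colour; a variant $G_2(n,k)$ replaces the join by $|Y_1|K_1\vee(K_{|X|}\cup|Y_2|K_1)$. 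In either graph every triangle avoiding the single extra colour has at least two vertices in $X$, so at most $\lfloor(2k-3)/2\rfloor=k-2$ such triangles can be vertex-disjoint, and since in a rainbow $kC_3$ all $3k$ edges carry distinct colours, at most one triangle of the family can use the extra colour; hence there is no rainbow $kC_3$, yet $c(G_3)=\binom{2k-3}{2}+(2k-3)(n-2k+3)+1$ and $c(G_2)=\binom{2k-3}{2}+\lfloor n^2/4\rfloor+1$ exceed the conjectured value on the stated range.

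This is precisely where your own argument cannot close. Your final paragraph correctly senses that the danger is a hybrid colouring in the regime $n=\Theta(k)$, but the hybrid you describe (a rainbow clique on slightly more than $3(k-1)$ vertices) is not the right one -- the actual counterexamples use a clique of size $2k-3$, chosen so that rainbow triangles need only \emph{two} clique vertices rather than three -- and you leave the mid-range case as ``the real work'' rather than resolving it. Consequently the inductive/stability scheme for the upper bound has an unfixable gap: in the range where $G_2$ and $G_3$ dominate, the quantity you would need to bound from above is simply larger than the conjectured formula, so no amount of tightening the averaging or the private-colour accounting can succeed. The constructive half of your proposal is sound (and reappears in the paper as two of the four extremal colourings feeding its modified Conjecture~\ref{conj2}), but as a proof of the stated identity the proposal fails because the identity itself is false.
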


\begin{conj}[\cite{HLY20}]\label{Conj:2}
For all positive integers $n$ and $m$ with $n\ge 3k$, every edge-colored graph $G$ of order $n$ with $\delta^c(G)\ge (n+k)/2$ contains $m$ vertex-disjoint rainbow triangles.
\end{conj}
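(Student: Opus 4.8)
The plan is to prove the statement by induction on $k$, peeling off one rainbow triangle at a time while maintaining the color-degree hypothesis. The base case $k=1$ is exactly the assertion that $\delta^c(G)\ge (n+1)/2$ forces a rainbow triangle, which is the already-confirmed $k=1$ instance of the conjecture and may be invoked directly. For the inductive step I would assume the statement for $k-1$ and, given $G$ on $n\ge 3k$ vertices with $\delta^c(G)\ge (n+k)/2$, find a single rainbow triangle $T$, pass to $G'=G-V(T)$ on $n-3$ vertices, and apply the induction hypothesis with parameter $k-1$; the resulting $k-1$ vertex-disjoint rainbow triangles in $G'$, together with $T$, yield the desired $k$.

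Everything hinges on the color-degree bookkeeping. To invoke the hypothesis for $G'$ one needs $\delta^c(G')\ge \tfrac{(n-3)+(k-1)}{2}=\tfrac{n+k-4}{2}$, so the admissible threshold drops by $2$ upon deleting three vertices. But deleting the three vertices of $T$ can destroy up to three distinct colors at a surviving vertex $v$, one for each edge from $v$ into $V(T)$, so the crude estimate only yields $d^c_{G'}(v)\ge d^c_G(v)-3\ge \tfrac{n+k-6}{2}$. This leaves a deficit of exactly one color, and eliminating that deficit is the whole problem.

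The remedy I would pursue is to delete not an arbitrary rainbow triangle but a judiciously chosen one. Concretely, I would seek a rainbow triangle $T$ whose three colors are \emph{redundant} for all but a bounded set $B$ of vertices, meaning that for every $v\notin B$ at least one color lost at $v$ still appears on another edge of $v$ inside $G'$, so that $d^c_{G'}(v)\ge d^c_G(v)-2\ge\tfrac{n+k-4}{2}$ as required. Such a triangle can be hunted for by favoring high-multiplicity colors, or by taking a vertex of $T$ all of whose incident colors repeat elsewhere. The few exceptional vertices in $B$ whose color degree genuinely falls by three would then be handled by a local exchange: either re-selecting $T$ so as not to create them, or swapping a bad vertex for one of the three vertices already committed to $T$, restoring the hypothesis before the induction is applied.

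I expect the production of such a low-loss rainbow triangle, valid under the bare hypothesis $\delta^c(G)\ge (n+k)/2$, to be the main obstacle. This is the step whose available slack scales with $n-3k$: the redundancy and the exchange both consume a few units of color degree, so guaranteeing them forces the color-degree budget to exceed the minimum by a controlled amount. This tightness is precisely the regime in which the constructions of Lo and Williams live, so the one-color deficit is, I believe, the essential barrier that any complete argument must confront, and its resolution is what determines how far down in $n$ the extraction can be driven all the way to $k=1$.
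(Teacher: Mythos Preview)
Your proposal has a genuine gap, and in fact the approach cannot succeed as stated because the conjecture itself is false for $n\le 17k/5$ (Lo--Williams); the paper only proves it under the additional hypothesis $n\ge 42.5k+48$. The gap is precisely where you locate it: you never establish the existence of a ``low-loss'' rainbow triangle whose deletion drops every surviving color degree by at most $2$. The suggestions you offer---preferring repeated colors, swapping exceptional vertices into $T$---are heuristics, not arguments, and nothing in the hypothesis $\delta^c(G)\ge(n+k)/2$ forces such a triangle to exist. Since the full conjecture fails, no refinement of this step can work without an extra assumption on $n$, and even for large $n$ iterating $k$ deletions of three vertices each while maintaining the hypothesis seems out of reach, because each deletion already consumes essentially all of the available slack.

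The paper's proof of the $n\ge 42.5k+48$ case takes a completely different route. It does induct on $k$, but the induction removes a \emph{single vertex} rather than a triangle (so the bookkeeping loses only $1$, not $3$) and is invoked only to show that no vertex can be the center of a large rainbow friendship graph, hence lies in at most $(3k-3)(n-1)$ rainbow triangles. The main engine is a global count: a lemma from \cite{LNSZ24} gives $rt(G)\ge kn(n+k)/12$, while an elaborate case analysis---classifying rainbow triangles by how they meet a fixed maximal family of $k-1$ disjoint rainbow triangles, together with an auxiliary ``hourglass'' structure $RF_{2,1}$---produces an upper bound of order $O(k^2 n)$. For $n\ge 42.5k+48$ the lower bound exceeds the upper, a contradiction. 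So the paper never tries to carry the color-degree hypothesis through repeated deletions; it converts the hypothesis once into a triangle count and works entirely with that.
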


The first conjecture is about the anti-Ramsey number of vertex-disjoint triangles. The concept of anti-Ramsey number of a graph $H$ was introduced by Erd\H{o}s, S\'os, and Simonovits \cite{ESS65}. Although it is named anti-Ramsey numbers, it indeed is closely related Tur\'an problems than Ramsey Theory. For example, for general graphs $H$, we have $\min\{ex(n,H-e):e\in E(H)\}+1\leq ar(n,H)\leq ex(n,H)$. Erd\H{o}s et al. \cite{ESS65} once conjectured that $ar(n,C_k)= (\frac{k-2}{2}+\frac{1}{k-1})n + O(1)$ and proved that $ar(n,C_3)=n-1$. After several efforts (see \cite{A83,JW03}), this problem was finally solved in \cite{MN05}. Since the paper \cite{ESS65}, there have been many works that study anti-Ramsey numbers of graphs. We refer the reader to the survey \cite{FMO10} and the results therein. 

We define $kC_3$ as the vertex-disjoint union of $k$ triangles. Yuan and Zhang \cite{YZ19} determined the exact values of $ar(n,kC_3)$ when $n$ is sufficiently large using Simonovits' method. Wu et al. \cite{WZLX2023} improved the result in \cite{YZ19} by
proving $ar(n,kC_3)=\lfloor\frac{(n-k+1)^2}{4}\rfloor+(k-2)(n-k+2)+\binom{k-2}{2}+1$ for all $n\geq 2k^2-k+2.$ Furthermore, for the base case, they \cite{WZLX2023} proved $ar(3k,kC_3)=\binom{3k-1}{2}+1$, and $ar(n, kC_3) \leq \frac{(n-k+2)^2}{4} +(k -2)(n-k+2) + \binom{k-2}{2} +(k-1)^2- \frac{n-3k}{2}+1$ for all $3k\leq n \leq 2k^2-k+2$. Based on these results, they proposed Conjecture
\ref{Conj:1}. Only when we are writing this manuscript, we find that Lu, Luo, and Ma \cite{LLM25+} have very recently proved that for any two integers $n,k\geq 2$, and $n\geq 15k+27$, we have
$ar(n,kC_3)=\lfloor\frac{(n-k+1)^2}{4}\rfloor+(k-2)(n-k+2)+\binom{k-2}{2}+1$.
However, as shown in Section \ref{Sec:2}, we shall show that Conjecture \ref{Conj:1} is false. Very interestingly for us, the construction is motivated by the work of Allen, B\"{o}ttcher, Hladk\'{y}, and Piguet \cite{ABHP15} on Tur\'an number of vertex-disjoint triangles.

Our first contribution to this paper is as follows.
\begin{theorem}\label{thm:1}
Conjecture \ref{Conj:1} is false.
\end{theorem}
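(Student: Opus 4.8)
I plan to refute Conjecture~\ref{Conj:1} by exhibiting, for a suitable pair $(n,k)$, an edge-colouring of $K_n$ that uses strictly more colours than the claimed value and yet contains no rainbow $kC_3$. The colouring is built on an extremal configuration for $\mathrm{ex}(n,kC_3)$ of the type isolated by Allen, B\"ottcher, Hladk\'y and Piguet~\cite{ABHP15}: a large clique together with a complete bipartite graph carrying a small dominating clique. This interpolates between the two ``classical'' families behind the two terms of Conjecture~\ref{Conj:1} (a near-$K_{3k-1}$, and a near-$K_{k-2}$ joined to a balanced complete bipartite graph), and for a middle range of $n$ it is strictly better than both.

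Concretely, I would fix an integer $a$ with $1\le a\le k-1$ and partition $V(K_n)=A\cup T\cup B$ with $|A|=3a-1$, $|T|=k-a-1$ and $|B|=n-2a-k+2$, splitting $B=X\cup Y$ as evenly as possible. Then I colour $E(K_n)$ by: (i) every edge inside $A$ gets its own colour; (ii) every edge with an endpoint in $T$ gets its own fresh colour; (iii) every edge between $X$ and $Y$ gets its own fresh colour; (iv) every remaining edge (inside $X$, inside $Y$, or between $A$ and $B$) receives one common fresh colour $c_0$. The number of colours is
\[
N(n,k,a)=\binom{3a-1}{2}+\binom{k-a-1}{2}+(k-a-1)(n-k+a+1)+\left\lfloor\frac{(n-2a-k+2)^2}{4}\right\rfloor+1 .
\]
To see that $K_n$ with this colouring has no rainbow $kC_3$, suppose $R_1,\dots,R_k$ are vertex-disjoint triangles whose $3k$ edges are all distinctly coloured; then at most one of these edges is coloured $c_0$. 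If some $R_i$ disjoint from $T$ met both $A$ and $B$, its vertices would split $2$--$1$, producing two $A$--$B$ edges, both coloured $c_0$ — impossible; hence each $R_i$ disjoint from $T$ lies inside $A$ or inside $B$. A triangle inside $B=X\cup Y$ is an odd cycle, so it cannot use only $X$--$Y$ edges, and using at most one such edge forces at least two within-part $c_0$-edges; thus such a triangle uses exactly one $c_0$-edge, so at most one $R_i$ lies inside $B$. Triangles inside $A$ are vertex-disjoint triples in a set of size $3a-1$, so at most $a-1$ of the $R_i$ lie inside $A$; and at most $|T|=k-a-1$ of the $R_i$ meet $T$. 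Summing, $k\le(a-1)+1+(k-a-1)=k-1$, a contradiction.

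It then remains to choose $n,k,a$ so that $N(n,k,a)$ exceeds $\max\{\tau_1,\tau_2\}$, where $\tau_1=\binom{3k-1}{2}+n-3k+1$ and $\tau_2=\left\lfloor(n-k+2)^2/4\right\rfloor+(k-2)(n-k+2)+\binom{k-2}{2}+1$ are the two terms of Conjecture~\ref{Conj:1}. I would take $a=k-1$, so that $T=\emptyset$ and $N=\binom{3k-4}{2}+\left\lfloor(n-3k+4)^2/4\right\rfloor+1$; a short computation then gives $N-\tau_1=\left\lfloor(n-3k)^2/4\right\rfloor+(n-3k)+13-9k$ and, up to floor adjustments, $N-\tau_2=k^2-(11+2(n-3k))k+14+3(n-3k)$. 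Both are positive throughout a window roughly of the form $3k+6\sqrt{k}\le n\le\frac72 k$, which is non-empty for all sufficiently large $k$; for instance $(n,k)=(3300,1000)$ with $a=999$ yields $N=4{,}509{,}615$ whereas $\max\{\tau_1,\tau_2\}=4{,}495{,}802$. Hence the formula of Conjecture~\ref{Conj:1} is not always correct, which proves Theorem~\ref{thm:1}.

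The main obstacle is locating this parameter window rather than the (routine) verification that the colouring avoids a rainbow $kC_3$: one must take $a$ large enough that the clique $A$ pushes $N$ past the near-$K_{3k-1}$ family ($\tau_1$), while keeping $B$ large enough that its bipartite block pushes $N$ past the near-$K_{k-2}$-join family ($\tau_2$). The two displayed differences quantify exactly this trade-off and show that such $n$ exist precisely when $n=\Theta(k)$ with $3k<n\lesssim\frac72 k$; all that remains is to confirm the inequalities, handling the floor functions, which is a direct computation. I would also remark that $N(n,k,a)$ maximised over $a$, together with the analogous configurations suggested by~\cite{ABHP15}, is the natural candidate for the corrected formula, and this is what motivates the modified conjecture proposed later in the paper.
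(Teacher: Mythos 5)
Your proposal is correct: the colouring you describe does avoid a rainbow $kC_3$ (at most one edge of the repeated colour $c_0$ may appear among the $3k$ edges, every triangle meeting both $A$ and $B$ would already contain two $c_0$-edges, triangles inside $B$ each need a $c_0$-edge, and $A$ with $|A|=3a-1$ packs at most $a-1$ disjoint triangles, giving at most $(k-a-1)+(a-1)+1=k-1$ triangles in total), and your numerical instance checks out: for $(n,k,a)=(3300,1000,999)$ one gets $N=4{,}509{,}615$ against $\max\{\tau_1,\tau_2\}=\tau_1=4{,}495{,}802$, so a single verified counterexample already refutes the conjecture. Your route is genuinely different from the paper's, though the overall strategy (an explicit colouring with more colours, inspired by the Allen--B\"ottcher--Hladk\'y--Piguet extremal structures) is the same. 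The paper's counterexamples $G_2(n,k)$ and $G_3(n,k)$ keep a rainbow clique of order $2k-3$ and rainbow-join it to one side of a bipartition (resp.\ to an independent set), the exclusion mechanism being that every triangle of the rainbow spanning subgraph meets this clique in at least two vertices, which yields counterexamples in roughly the range $3.25k\lesssim n\lesssim 4.5k$; your construction instead places a rainbow $K_{3k-4}$ vertex-disjointly from a rainbow balanced complete bipartite graph on the remaining $n-3k+4$ vertices (the extra dominating set $T$ in your general family is not needed for the disproof), with a triangle-packing bound in the clique as the mechanism, and it beats the conjectured value in the narrower window $3k+O(\sqrt{k})\lesssim n\lesssim 3.5k$. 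Two remarks: your window claims for general $a$ are only sketched ``up to floor adjustments,'' but this is harmless since the explicit $(n,k)$ settles Theorem~\ref{thm:1}; and your closing suggestion that $N(n,k,a)$ maximised over $a$ is the natural corrected formula is not right as stated --- at your own example the paper's $G_3(3300,1000)$ already has $4{,}595{,}098>4{,}509{,}615$ colours, which is why the paper's modified Conjecture~\ref{conj2} is phrased in terms of the four families $G_1,\dots,G_4$ rather than your clique-plus-bipartite family.
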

\begin{remark}
For all integers $n \geq 3k $ where $0.24n\lesssim k\lesssim 0.3n$, there exist two edge-colorings of $K_n$: $G_2(n,k)$ and $G_3(n,k)$ (for details, see the next section and Fig. 1), which contain no rainbow $kC_3$, we can see that $c(G_2(n,k)$ and $c(G_3(n,k)$ are both larger than the conjectured values of $ar(n,kC_3)$, thus disproving Conjecture \ref{Conj:1}.    
\end{remark}

The second conjecture has many motivations. A well-known fact is that a minimum degree of at least $\frac{n+1}{2}$ ensures that every graph on $n$ vertices contains a triangle. It is natural to ask an edge-colored version of this observation. In 2007 (and formally in 2012), Li and Wang \cite{LW12} conjectured that every edge-colored graph has a rainbow triangle if $\delta^c(G)\geq \frac{n+1}{2}$. This conjecture was confirmed by H. Li \cite{L13} and independently in \cite{LNXZ14} . 
\begin{theorem}[\cite{L13}]\label{Thm:Li}
 Let $G$ be an edge-colored graph on $n\geq 3$ vertices. If $\delta^c(G)\geq \frac{n+1}{2}$ then $G$ contains a rainbow triangle.   
\end{theorem}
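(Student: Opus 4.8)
The plan is to argue by contradiction: suppose $G$ is an edge-colored graph on $n\ge 3$ vertices with $\delta^c(G)\ge(n+1)/2$ and with no rainbow triangle. Fix a vertex $v$ and a \emph{rainbow neighborhood} $A$ of $v$, i.e.\ a set of $d^c(v)$ neighbors of $v$ on which the spoke colors $c(va)$ $(a\in A)$ are pairwise distinct (such a set exists: choose one neighbor per color at $v$). Write $k:=|A|=d^c(v)\ge(n+1)/2$ and $B:=V(G)\setminus(A\cup\{v\})$, so $|B|=n-1-k<k$. The engine is a piece of local structure forced by the absence of rainbow triangles: for each edge $aa'$ inside $A$ the triangle $vaa'$ is not rainbow, so $c(aa')\in\{c(va),c(va')\}$, and since $c(va)\ne c(va')$ exactly one of these holds; this defines an orientation $D$ of $G[A]$ by putting $a\to a'$ whenever $c(aa')=c(va')$. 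Two consequences follow. (i) $D$ has no directed triangle: a directed triangle $a\to a'\to a''\to a$ would be a triangle of $G$ with edge colors $c(va'),c(va''),c(va)$, which are pairwise distinct, hence rainbow. (ii) For $a\in A$, every arc of $D$ entering $a$ is colored $c(va)$ (as is the spoke $va$), while for each arc $a\to a'$ leaving $a$ the edge $aa'$ is colored $c(va')$, and these colors are pairwise distinct over distinct out-neighbors; so $a$ sees exactly $1+d_D^+(a)$ colors on its edges into $A\cup\{v\}$, and as the remaining colors at $a$ lie on its at most $|B|$ edges to $B$,
\[
d_D^+(a)\ \ge\ d^c(a)-1-|B|\ \ge\ \tfrac{n+1}{2}-1-(n-1-k)\ =\ k-\tfrac{n-1}{2}\ =:\ t\ \ge\ 1 .
\]

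Thus $D$ is an orientation of a $k$-vertex graph with minimum out-degree at least $t\ge1$ and no directed triangle; in particular $D$ has a directed cycle, necessarily of length at least $4$. When $k$ is close to $n$ this is already impossible: $\sum_{a\in A}d_D^+(a)=|E(G[A])|\le\binom{k}{2}$ together with $\sum_{a\in A}d_D^+(a)\ge kt$ gives $t\le(k-1)/2$, i.e.\ $k\le n-2$, and the borderline $k=n-2$ needs one more line (equality forces $G[A]=K_{n-2}$ and $D$ a tournament, and a tournament on at least three vertices with positive minimum out-degree has a directed triangle). Hence it remains to rule out the middle range $(n+1)/2\le k\le n-3$. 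Here I would bring the set $B$ into play: each $b\in B$ has $d^c(b)\ge(n+1)/2$, and since its edges to $B\cup\{v\}$ account for at most $|B|$ colors, $b$ sees at least $k-\tfrac{n-3}{2}=t+1$ colors on its edges into $A$. Applying the no-rainbow-triangle hypothesis to every triangle $baa'$ with $aa'$ an arc of $D$ forces $c(aa')\in\{c(ba),c(ba')\}$ in addition to $c(aa')\in\{c(va),c(va')\}$, so the colors inside $A$ are pinned simultaneously by the spokes of $v$ and, for each adjacent $b$, by the spokes of $b$. Organizing this coupling — for instance, noting that the ``cross'' coincidences $c(va_i)=c(ba_j)$ form a partial matching between $v$'s and $b$'s spoke colors, which restricts $G[A]$ on the vertices not color-matched to $b$ — and combining it over all $b\in B$ against the bound $d_D^+(a)\ge t$ is intended to over-determine the coloring and produce a rainbow triangle inside $\{v\}\cup A\cup B$.

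The main obstacle is exactly this middle range, and the reason it is delicate is that the hypothesis is tight: a properly edge-colored balanced complete bipartite graph is triangle-free with $\delta^c=\lfloor n/2\rfloor$, so no slack is available and the inequality $\delta^c\ge(n+1)/2$ must be used in full. A guiding special case is $G=K_n$: then $G$ admits a Gallai partition into parts $P_1,\dots,P_m$ $(m\ge2)$ whose reduced coloring uses at most two colors, whence $d^c(u)\le d^c_{G[P_i]}(u)+2\le|P_i|+1$ for $u\in P_i$ forces $|P_i|\ge(n-1)/2$ for every $i$, so $m=2$, and then refining to a single between-part color gives $|P_i|\ge(n+1)/2$ for both parts — impossible. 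The remaining work is to reproduce this tightness when $G$ is not complete, where no Gallai structure is available and one must instead extract it from the highly constrained ``endpoint-labeled'' coloring that the spokes of $v$ and the vertices of $B$ impose on $G[A]$; a natural variant worth trying is to choose $v$ with maximum color degree, or so that $G[A]$ spans the most edges, to make that structure as rigid as possible.
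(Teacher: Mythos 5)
Your write-up is a plan rather than a proof, and you say so yourself: the argument is only closed in the easy regime $k=d^c(v)\ge n-2$, while the ``middle range'' $(n+1)/2\le k\le n-3$ is left as an intention (``is intended to over-determine the coloring\ldots'', ``The remaining work is\ldots''). That middle range is not a technicality — it is the whole theorem. The structure you extract there, an orientation $D$ of $G[A]$ with no directed triangle and minimum out-degree $t=k-\frac{n-1}{2}\ge 1$, carries essentially no force on its own: when $k$ is close to $(n+1)/2$ you only get $t=1$, and digraphs with minimum out-degree $1$ (indeed, any orientation of a bipartite graph) can be free of directed triangles; Caccetta--H\"aggkvist-type thresholds for forcing a directed triangle are linear in $|A|$, far beyond what your bound gives. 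So everything hinges on the coupling with the vertices of $B$, and that step is never carried out — you describe a ``partial matching between $v$'s and $b$'s spoke colors'' and say combining it over all $b\in B$ should over-determine the coloring, but no inequality or contradiction is derived. The Gallai-partition argument you give is correct but only for $G=K_n$; as you acknowledge, no Gallai structure is available for general $G$, and the extremal example (properly colored $K_{n/2,n/2}$, where $\delta^c=n/2$) shows the hypothesis is used with no slack, so a hand-waving ``rigidity'' argument cannot be waved through.

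For calibration: the paper does not prove this statement at all — it is quoted as H.~Li's theorem from \cite{L13} (with an independent proof in \cite{LNXZ14}), so there is no in-paper proof to match; the known proofs proceed quite differently from your oriented-graph reduction (e.g.\ by induction on $n$ together with careful color-degree counting around a suitably chosen vertex, or, in \cite{LNXZ14}, by a counting argument that also yields the stability statement for $\delta^c\ge n/2$). The parts of your argument that are written out (the rainbow neighborhood, the orientation, claims (i)--(ii), the out-degree bound, and the boundary cases $k\ge n-2$) are correct, but as it stands the proposal has a genuine gap and does not establish the theorem.
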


\begin{theorem}[\cite{LNXZ14}]
 Let $G$ be an edge-colored graph on $n\geq 5$ vertices. If $\delta^c(G)\geq \frac{n}{2}$, then $G$ contains a rainbow triangle, unless $G$ is a properly colored $K_{\frac{n}{2},\frac{n}{2}}$ where $n$ is even.
\end{theorem}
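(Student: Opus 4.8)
The plan is to leverage Theorem~\ref{Thm:Li} and then analyse the boundary case structurally. First, reductions: if $n$ is odd then $\delta^c(G)\ge n/2$ already forces $\delta^c(G)\ge(n+1)/2$ and Theorem~\ref{Thm:Li} gives a rainbow triangle, so we may assume $n$ is even; and if $\delta^c(G)\ge n/2+1$ then again Theorem~\ref{Thm:Li} applies, so we may assume $\delta^c(G)=n/2$ exactly. Suppose for contradiction that $G$ has no rainbow triangle; the goal is to show $G$ must be a properly edge-coloured $K_{n/2,n/2}$, which will follow once we show $G$ is triangle-free.

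The triangle-free case handles the ``unless'' clause. If $G$ is triangle-free then $\delta(G)\ge\delta^c(G)=n/2$, so for any edge $uv$ the sets $N(u)$ and $N(v)$ are disjoint, each of size at least $n/2$, and hence partition $V$ into two halves; a one-line propagation (each $w\in N(u)$ satisfies $N(w)\subseteq V\setminus N(u)=N(v)$, and $|N(w)|\ge n/2=|N(v)|$ forces equality) yields $G=K_{n/2,n/2}$. Then $n/2=d_G(v)\ge d^c(v)\ge\delta^c(G)=n/2$ at every vertex, so $d^c(v)=d_G(v)$, i.e. $G$ is properly edge-coloured. So it suffices to show $G$ contains no triangle, and we henceforth assume it does and seek a contradiction.

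Three tools. (i) \emph{Local structure.} For a vertex $v$, fix a set $R_v$ of edges at $v$, exactly one of each colour at $v$, put $N=N_{R_v}(v)$ and $\alpha(x)=c(vx)$ for $x\in N$; the $\alpha(x)$ are pairwise distinct, and if $x,y\in N$ with $xy\in E(G)$ then $c(xy)\in\{\alpha(x),\alpha(y)\}$, as otherwise $vxy$ is rainbow. (ii) \emph{A counting inequality.} The edge $vx$ together with the $b(x)$ edges from $x$ into $N$ of colour $\alpha(x)$ all share one colour, so $d^c(x)\le d_G(x)-b(x)$ for $x\in N$; summing and using $\sum_{x\in N}b(x)=e(G[N])$,
\[
e\bigl(G[N_{R_v}(v)]\bigr)\ \le\ \sum_{x\in N_{R_v}(v)}\bigl(d_G(x)-d^c(x)\bigr)\qquad\text{for every }v.
\]
(iii) \emph{A deletion trick.} For every $v$ we must have $\delta^c(G-v)\le n/2-1$, since otherwise $\delta^c(G-v)\ge n/2=\tfrac{(n-1)+1}{2}$ and Theorem~\ref{Thm:Li} gives a rainbow triangle inside $G-v\subseteq G$; hence every vertex $v$ has a neighbour $u$ with $d^c(u)=n/2$ such that $uv$ is the unique edge of its colour at $u$.

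Now the endgame. Since $G$ has a triangle, pick one, say $xyz$, in which the two edges at a common vertex have distinct colours (the fully monochromatic triangle being an easy variant of what follows), set $v$ to be that vertex with the other two vertices placed in $N_{R_v}(v)$, and carry out a case analysis on the colours of the edges inside $N_{R_v}(v)$ and of the edges from $N_{R_v}(v)$ to $V\setminus(N_{R_v}(v)\cup\{v\})$: in every configuration one either exhibits three distinct colours on some triangle, or violates (ii)--(iii) together with $\delta^c(G)=n/2$. This contradiction shows $G$ is triangle-free, whence $G=K_{n/2,n/2}$ properly coloured by the second paragraph. The main obstacle is precisely this last step: the hypothesis lies exactly one unit below the threshold of Theorem~\ref{Thm:Li}, so the global bound (ii) is not by itself decisive, and near-extremal colourings (many edges, few colours, ``colour deficiency'' $d_G(x)-d^c(x)$ concentrated on a few high-degree vertices) have to be excluded by a careful local argument, using heavily that a vertex $u$ produced by (iii) behaves almost exactly like a vertex of a properly edge-coloured complete bipartite graph.
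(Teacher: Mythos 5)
There is a genuine gap, and it sits exactly where the theorem is hard. This statement is quoted by the paper from \cite{LNXZ14} without proof, so the only question is whether your argument stands on its own — and it does not. Your reductions (to $n$ even, $\delta^c(G)=n/2$), your handling of the triangle-free case (forcing $G=K_{n/2,n/2}$, properly coloured because $d^c(v)\le d_G(v)=n/2$), and your three tools (the local colour structure in $N_{R_v}(v)$, the inequality $e(G[N_{R_v}(v)])\le\sum_{x}(d_G(x)-d^c(x))$, and the deletion observation via Theorem \ref{Thm:Li}) are all correct, but these are the routine parts. The entire content of the theorem is the claim that a graph with $\delta^c(G)=n/2$ containing a triangle but no rainbow triangle cannot exist, and for this you offer only the sentence that ``a case analysis on the colours \dots in every configuration either exhibits three distinct colours on some triangle, or violates (ii)--(iii)''. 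No cases are listed, no configuration is analysed, and no contradiction is actually derived; you even concede in the final sentence that the global bound (ii) is not decisive at this threshold and that the near-extremal colourings ``have to be excluded by a careful local argument'' — which is precisely the argument that is missing. As written, tools (ii) and (iii) do not visibly rule out, say, a colouring in which the colour deficiencies $d_G(x)-d^c(x)$ are concentrated on a few vertices of a triangle, so the reader has no way to verify that the asserted dichotomy holds.

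Concretely: to turn this into a proof you would have to carry out the extremal analysis at $\delta^c(G)=n/2$ in full — for instance, fix a vertex $v$ in a triangle with two distinctly coloured edges, classify the edges inside $N_{R_v}(v)$ by whether they take the colour of their left or right endpoint's edge to $v$, track how the vertex $u$ supplied by (iii) (whose edge $uv$ is the unique edge of its colour at $u$, with $d^c(u)=n/2$) interacts with that triangle, and show that every outcome produces either a rainbow triangle or a vertex of colour degree below $n/2$. That analysis is the substance of the original proof in \cite{LNXZ14}; until it is written out, the proposal is an outline of a strategy rather than a proof.
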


There are several extensions; for example, Czygrinow, Molla, Nagle, and
Oursler [7] proved that H. Li's condition in Theorem \ref{Thm:Li} ensures a rainbow $\ell$‐cycle $C_{\ell}$
whenever $n>432\ell$, which is sharp for a fixed odd integer $\ell\geq 3$ when $n$ is sufficiently large. For more related results, see \cite{CKRY16,LNSZ24,CN25}. 

In 2020, Hu, H. Li, and Yang \cite{HLY20} proved that every edge-colored graph on $n\geq 20$ vertices has two vertex-disjoint rainbow triangles. This result was slighted improved to $n\geq 6$ in \cite{CLN22}. So, Conjecture \ref{Conj:2} is true for $k=1,2$. Dirac \cite{D63} proved that every graph $G$ on $n\geq 3k$ vertices has $k$ vertex-disjoint triangles if $\delta(G)\geq \frac{n+k}{2}$. The base case $n=3k$ is equivalent to the famous Corr\'adi-Hajn\'al Theorem \cite{CH63} which states that every $n$-vertex graph with $n=3k$ and minimum degree $\delta(G)\geq 2k$ has a triangle-factor. 
So, if it were true, Conjecture \ref{Conj:2} can be seen as a rainbow version of Dirac's theorem. However, Lo and Williams \cite{LW-Arxiv-24} give a construction showing that this conjecture is false when $n\leq \frac{17k}{5}.$ Thus, it is natural to ask whether Conjecture \ref{Conj:2} holds for $n=\Omega(k)$. 
Our second result confirms this.
\begin{theorem}
 Conjecture \ref{Conj:2} is true for $n\ge 42.5k+48$.   
\end{theorem}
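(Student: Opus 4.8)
\medskip

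\noindent\emph{Proof strategy.} I would argue by contradiction, using a maximal family of vertex-disjoint rainbow triangles together with an exchange (rotation) argument; the cases $k\le 2$ are already known \cite{HLY20,CLN22}, so assume $k\ge 3$. Suppose $\delta^c(G)\ge (n+k)/2$ and $n\ge 42.5k+48$ but $G$ contains no $k$ vertex-disjoint rainbow triangles, and let $\mathcal T=\{T_1,\dots,T_m\}$ be a maximal family of such triangles, so $m\le k-1$. Set $U=\bigcup_i V(T_i)$ and $W=V(G)\setminus U$, so $|U|=3m$ and $|W|=n-3m\ge n-3(k-1)\ge 39.5k+51$; the whole role of the hypothesis $n\ge 42.5k+48$ is to make $|W|$ linear in $k$ and large compared with every other quantity below. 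Because $\mathcal T$ is maximal, $G[W]$ has no rainbow triangle, so Theorem~\ref{Thm:Li} applied inside $G[W]$ forces $\delta^c(G[W])\le (|W|-1)/2$; the same computation also shows $m>(k-1)/3$, since if $m\le(k-1)/3$ then $\delta^c(G[W])\ge \delta^c(G)-3m\ge (|W|+1)/2$, which by Theorem~\ref{Thm:Li} would contradict $G[W]$ being rainbow-triangle-free. Finally, a short analysis of the low color degree vertices of $G[W]$, using \cite{LNXZ14} and a bootstrapping step (delete a would-be unique low color degree vertex of $G[W]$ and reapply Theorem~\ref{Thm:Li}), reduces matters to: \emph{(i)} $W$ contains two vertices $w_0,w_1$ with $d^c_{G[W]}(w_i)\le |W|/2$; or \emph{(ii)} $G[W]$ is, up to a bounded exceptional set, a properly colored complete bipartite graph with near-balanced parts $A,B$.

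\smallskip

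First I would handle the generic case \emph{(i)}. Each $w_i$ sends at least $d^c_G(w_i)-|W|/2\ge (n+k)/2-(n-3m)/2=(k+3m)/2\ge k$ colors into $U$ (using $m>(k-1)/3$); since these colors lie on at most $3m$ edges, few of them are locally repeated, and one deduces that $w_i$ has privately colored edges to at least about $k$ vertices of $U$, hence --- since these lie in only $m\le k-1$ triangles --- some triangle, and typically several, receives two such edges from $w_0$. The engine of the proof is then a local reconfiguration touching a single triangle $T_1\in\mathcal T$: if $w_0$ has suitable edges to two vertices $x,y$ of $T_1$, then $\{w_0,x,y\}$ is a rainbow triangle, freeing the third vertex $z$ of $T_1$; as $d^c_G(z)\ge (n+k)/2$, $z$ has many neighbors in $W$, so (barring bipartite-type degeneracies, which fall under case \emph{(ii)}) $z$ lies in a rainbow triangle inside $\{z\}\cup(W\setminus\{w_0\})$, and replacing $T_1$ by these two triangles produces a family of size $m+1$ --- a contradiction. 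When a color coincidence blocks a step (the edge $xy$ sharing a color with $w_0x$ or $w_0y$, or $z$'s $W$-neighborhood carrying no rainbow triangle), one expends some surplus: a third privately colored $w_0$--$T_1$ edge, freeing a different vertex of $T_1$; a different triangle of $\mathcal T$ in the role of $T_1$; the second vertex $w_1$; or another of the abundant vertices of $W$.

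\smallskip

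In the bipartite case \emph{(ii)}, every vertex of $W$ has color degree $\approx |W|/2$ inside $G[W]$, so the surplus $\delta^c(G)-|W|/2\ge (k+3m)/2$ again yields many privately colored edges from $W$ to $U$; moreover every rainbow triangle meeting $W$ has the form $\{w,u,u'\}$ with $w\in W$, $u,u'\in U$, or $\{a,b,u\}$ with $a\in A$, $b\in B$, $u\in U$. I would run the same reconfiguration, now drawing auxiliary $W$-vertices one from $A$ and one from $B$ whenever a $W$--$W$ edge is required; the rigidity of the (near-)balanced complete bipartite structure makes this case, if anything, more transparent than case \emph{(i)}.

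\smallskip

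The main obstacle --- and what pins down the explicit constant --- is controlling the color coincidences in the reconfiguration: one must show that the colors inside $U$, on the edges between $W$ and $U$, and inside $G[W]$ cannot all conspire so that no rainbow way exists of turning a few rainbow triangles plus a few spare vertices of $W$ into strictly more rainbow triangles. Each such obstruction is defeated at the cost of a bounded amount of surplus color degree or a bounded number of the $\Omega(k)$ spare vertices in $W$, and optimizing these costs simultaneously across cases \emph{(i)} and \emph{(ii)} is exactly what produces the bound $n\ge 42.5k+48$.
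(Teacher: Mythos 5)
Your outline takes a genuinely different route from the paper (a maximality-plus-reconfiguration argument instead of the paper's induction-plus-counting argument), but as written it has a real gap at its engine. After you replace $T_1$ by the rainbow triangle $\{w_0,x,y\}$ and ``free'' the vertex $z$, you assert that since $d^c_G(z)\ge (n+k)/2$, the vertex $z$ lies in a rainbow triangle inside $\{z\}\cup(W\setminus\{w_0\})$, barring ``bipartite-type degeneracies'' that you fold into case \emph{(ii)}. This step is unjustified: a single vertex of high color degree need not lie in any rainbow triangle. For instance, order $W=\{w_1,\dots,w_{|W|}\}$, give the edges $zw_i$ pairwise distinct colors $c_i$, and color each edge $w_iw_j$ with $i<j$ by $c_i$; then $d^c(z)=|W|$ yet no triangle of $\{z\}\cup W$ is rainbow, and $G[W]$ is nowhere near a properly colored balanced bipartite graph. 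Such configurations live exactly in your case \emph{(i)}, where the only structural information you have about $G[W]$ is $\delta^c(G[W])\le |W|/2$, so no stability theorem (Theorem~\ref{Thm:Li} or \cite{LNXZ14}) rules them out. Whether the global hypothesis $\delta^c(G)\ge (n+k)/2$ forbids all such obstructions simultaneously is precisely the hard part of the theorem, and your sketch defers it to ``expending some surplus'' and ``optimizing these costs,'' with the threshold $42.5k+48$ asserted rather than derived. A secondary looseness: the bootstrapping that is supposed to produce two low-color-degree vertices of $G[W]$ does not quite work as stated, since after deleting the unique low vertex the remaining color degrees are only guaranteed to be at least $|W|/2-1$, short of the $(|W|-1+1)/2$ needed to reapply Theorem~\ref{Thm:Li} to $G[W]-w_0$.

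For contrast, the paper sidesteps all local color-coincidence bookkeeping by a global count. It proceeds by induction on $k$: the induction hypothesis supplies $k-1$ vertex-disjoint rainbow triangles with vertex set $V_0$; Lemma~\ref{Lem: delta^c > (n+k)/2} (built on the counting result of \cite{LNSZ24}, Lemma~\ref{Lem: delta^c number of rainbow K3}) gives the lower bound $rt(G)\ge kn(n+k)/12$; and the assumption that there is no $k$-th disjoint rainbow triangle is converted, via the friendship-subgraph claims ($RF_s(v)$-freeness for $s\ge 3k-2$, the maximal hourglass parameter $t$, and the decomposition of $E(G[V_0])$ into $E_1,\dots,E_5$), into an upper bound on $rt(G)$ that falls below $kn(n+k)/12$ once $n\ge 42.5k+48$. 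If you want to pursue your reconfiguration route, you would need a lemma guaranteeing, under the global color degree condition, that a freed vertex $z$ together with the large set $W$ actually supports a new rainbow triangle (or an averaging/counting substitute for it); that is the missing idea, and without it the exchange step cannot be completed.
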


\begin{remark}
When we study $ar(n,kC_3)$, we need $kC_3$ to be rainbow, that is, the edges of all these vertex-disjoint triangles are assigned pairwise different colors. However, when we consider Conjecture \ref{Conj:2}, ``vertex-disjoint rainbow triangles" means that each vertex-disjoint triangle is rainbow, but two different rainbow triangles can have the same color.   
\end{remark}

\section{The disproof of Conjecture \ref{Conj:1}} \label{Sec:2}
We first introduce four classes of extremal graphs.
\begin{definition}
Let $n$ and $k$ be non-negative integers with $n \geq 3k$. We define four edge-colorings of $K_n$ as follows.
\end{definition}
\begin{itemize}
		\item $G_1(n,k)$: Let $V(G_1(n,k))=X\cup Y_1 \cup Y_2$, where $\arrowvert X\arrowvert=k-2$, $\arrowvert Y_1\arrowvert=\lfloor\frac{n-k+2}{2}\rfloor$, and $\arrowvert Y_2\arrowvert=\lceil\frac{n-k+2}{2}\rceil$.
		We color all edges in $K_{|X|} \vee K_{|Y_1|,|Y_2|}$ with distinct colors, and color $E(K_n)\setminus E(K_{|X|} \vee K_{|Y_1|,|Y_2|})$ with another new color, depicted in Figure \ref{fig1}, so
		\[c(G_1(n,k)=\left(\begin{matrix} k-2 \\ 2\end{matrix}\right)+(k-2)(n-k+2)+\left\lfloor\frac{(n-k+2)^2}{4}\right\rfloor+1.\]
		\begin{figure}[H]
	\centering
	\begin{tikzpicture}
	
	\draw[thick, rounded corners=10pt] (-1.5,-2.5) rectangle (5.5,2.5);
	
	\draw (0,0) ellipse (1cm and 2cm);
	\node at (0,2.2) {$Y_1$};
	\draw[fill=white] (0,1.5) circle (2pt) ;
	\draw[fill=white] (0,0) circle (2pt)  ;
	\draw[fill=white] (0,-1.5) circle (2pt)  ;
	\node at (0,-0.75) {$\vdots$};
	
	\draw (4,0) ellipse (1cm and 2cm);
	\node at (4,2.2) {$Y_2$};
	\draw[fill=white] (4,1.5) circle (2pt) ;
	\draw[fill=white] (4,0) circle (2pt) ;
	\draw[fill=white] (4,-1.5) circle (2pt) ;
	\node at (4,-0.75) {$\vdots$};
	
	\draw[red, thick] (0,1.5) -- (4,1.5);
	\draw[blue, thick] (0,1.5) -- (4,0);
	\draw[green, thick] (0,1.5) -- (4,-1.5);
	
	\draw[orange, thick] (0,0) -- (4,1.5);
	\draw[purple, thick] (0,0) -- (4,0);
	\draw[cyan, thick] (0,0) -- (4,-1.5);
	
	\draw[magenta, thick] (0,-1.5) -- (4,1.5);
	\draw[brown, thick] (0,-1.5) -- (4,0);
	\draw[teal, thick] (0,-1.5) -- (4,-1.5);
	
	\filldraw[fill=gray!30, draw=black, thick] (2,-4.2) ellipse (3cm and 1cm);
	\node at (2,-5.5) {$X$};
	\node at (2,-4.2) {$rb K_{k-2}$};
	\draw[red, line width=3pt] (1.7,-2.5) -- (1.7,-3.2);
	\draw[orange, line width=3pt] (1.8,-2.5) -- (1.8,-3.2);
	\draw[yellow, line width=3pt] (1.9,-2.5) -- (1.9,-3.2);
	\draw[green, line width=3pt] (2.0,-2.5) -- (2.0,-3.2);
	\draw[blue, line width=3pt] (2.1,-2.5) -- (2.1,-3.2);
	\draw[cyan, line width=3pt] (2.2,-2.5) -- (2.2,-3.2);
	\draw[magenta, line width=3pt] (2.3,-2.5) -- (2.3,-3.2);
	
	\node[font=\small] at (2.6, -2.85) {rb};
	
	\end{tikzpicture}
	
		\caption{$G_1(n,k)$}
		\label{fig1}
	\end{figure}
    
		\item $G_2(n.k)$: The second-class of extremal graphs is defined only for $k<\frac{n+7}{4}$. Let $V_2(G_2(n,k))$$=X \cup Y_1 \cup Y_2$ with $\arrowvert X\arrowvert=2k-3$, $\arrowvert Y_1\arrowvert=\lfloor \frac{n}{2}\rfloor$, and $Y_2=\lceil\frac{n}{2}-2k+3\rceil$ (or $\arrowvert Y_1\arrowvert=\lceil \frac{n}{2}\rceil$, and $Y_2=\lfloor\frac{n}{2}-2k+3 \rfloor$ ). 
		We color all edges in $|Y_1|K_1 \vee (K_{|X|} \cup |Y_2|K_1)$ with distinct colors, and color $E(K_n)\setminus E(|Y_1|K_1 \vee (K_{|X|} \cup |Y_2|K_1))$ with another new color, depicted in Figure \ref{fig2}, so
		\[c(G_2(n,k)=\left(\begin{matrix} 2k-3 \\ 2\end{matrix}\right)+\left\lfloor\frac{n^2}{4}\right\rfloor+1.\]

        \begin{figure}
		\centering
		\begin{tikzpicture}[
			dot/.style={circle, draw, fill=white, inner sep=0pt, minimum size=3pt},
			myellipse/.style={draw, ellipse, minimum width=1.5cm, minimum height=3cm, thick}
			]
			
			\filldraw[fill=gray!30, draw=black, thick] (0,0) ellipse (0.9cm and 1.5cm);
			\node at (0,2.0) {$X$};
			\node[dot] (x1) at (0,1.0) {};
			\node[dot] (x2) at (0,0) {};
			\node[dot] (x3) at (0,-1.0) {};
			\node at (0,-0.5) {$\vdots$};
			\node at (0, 0.5) {$ rb K_{2k-3}$};
			
			\draw[thick] (3,0) ellipse (0.75cm and 1.5cm);
			\node at (3,2.0) {$Y_1$};
			\node[dot] (y1) at (3,1.0) {};
			\node[dot] (y2) at (3,0) {};
			\node[dot] (y3) at (3,-1.0) {};
			\node at (3,-0.5) {$\vdots$};
			
			\draw[thick] (6,0) ellipse (0.75cm and 1.5cm);
			\node at (6,2.0) {$Y_2$};
			\node[dot] (z1) at (6,1.0) {};
			\node[dot] (z2) at (6,0) {};
			\node[dot] (z3) at (6,-1.0) {};
			\node at (6,-0.5) {$\vdots$};
			
			\draw[red, thick] (x1) -- (y1);
			\draw[blue, thick] (x1) -- (y2);
			\draw[green, thick] (x1) -- (y3);
			\draw[orange, thick] (x2) -- (y1);
			\draw[purple, thick] (x2) -- (y2);
			\draw[cyan, thick] (x2) -- (y3);
			\draw[magenta, thick] (x3) -- (y1);
			\draw[brown, thick] (x3) -- (y2);
			\draw[teal, thick] (x3) -- (y3);
			
			\draw[lime, thick] (y1) -- (z1);
			\draw[violet, thick] (y1) -- (z2);
			\draw[pink, thick] (y1) -- (z3);
			\draw[olive, thick] (y2) -- (z1);
			\draw[blue!70!black, thick] (y2) -- (z2);   
			\draw[red!70!black, thick] (y2) -- (z3);     
			\draw[blue!50!white, thick] (y3) -- (z1);   
			\draw[yellow!80!black, thick] (y3) -- (z2);  
			\draw[gray!70, thick] (y3) -- (z3);         
			
		\end{tikzpicture}
		
		\caption{$G_2(n,k)$}
		\label{fig2}
	\end{figure}
    
		\item $G_3(n.k)$: Let $V(G_3(n,k))=X\cup Y$ with $\arrowvert X\arrowvert=2k-3$ and $\arrowvert Y\arrowvert=n-2k+3$. 
		We color all edges in $|Y|K_1 \vee K_{|X|} $ with distinct colors, and color $E(K_n)\setminus E(|Y|K_1 \vee K_{|X|} )$ with another new color, depicted in Figure \ref{fig3}, so
		\[c(G_3(n,k)=\left(\begin{matrix} 2k-3 \\ 2\end{matrix}\right)+(n-2k+3)(2k-3)+1.\]

        \begin{figure}
		\centering
		\begin{tikzpicture}[
		 	dot/.style={circle, draw, fill=white, inner sep=0pt, minimum size=3pt},
		 	myellipse/.style={draw, ellipse, minimum width=1.5cm, minimum height=3cm, thick}
		 	]
		 	
		 	\filldraw[fill=gray!30, draw=black, thick] (0,0) ellipse (0.9cm and 1.5cm);
		 	\node at (0,2.0) {$X$};
		 	\node[dot] (x1) at (0,1.0) {};
		 	\node[dot] (x2) at (0,0) {};
		 	\node[dot] (x3) at (0,-1.0) {};
		 	\node at (0,-0.5) {$\vdots$};
		 	\node at (0,0.5) {$rb K_{2k-3}$};
		 	
		 	\draw[thick] (3,0) ellipse (0.75cm and 1.5cm);
		 	\node at (3,2.0) {$Y$};
		 	\node[dot] (y1) at (3,1.0) {};
		 	\node[dot] (y2) at (3,0) {};
		 	\node[dot] (y3) at (3,-1.0) {};
		 	\node at (3,-0.5) {$\vdots$};
		 	
		 	\draw[red, thick] (x1) -- (y1);
		 	\draw[blue, thick] (x1) -- (y2);
		 	\draw[green, thick] (x1) -- (y3);
		 	\draw[orange, thick] (x2) -- (y1);
		 	\draw[purple, thick] (x2) -- (y2);
		 	\draw[cyan, thick] (x2) -- (y3);
		 	\draw[magenta, thick] (x3) -- (y1);
		 	\draw[brown, thick] (x3) -- (y2);
		 	\draw[teal, thick] (x3) -- (y3);
		 	
		\end{tikzpicture}

		\caption{$G_3(n,k)$}
		\label{fig3}
	\end{figure}

\item $G_4(n,k)$: Let $V(G_4(n,k))=X\cup Y$ with $\arrowvert X\arrowvert=3k-1$ and $\arrowvert Y\arrowvert=n-3k+1$. We color all edges in $K_{|X|}$ with distinct colors. Only one color is added for each vertex in $Y$ added, depicted in Figure \ref{fig4}, so
		\[c(G_4(n,k)=\left(\begin{matrix} 3k-1 \\ 2\end{matrix}\right)+n-3k+1.\]

        \begin{figure}
		\centering
		\begin{tikzpicture}[
			dot/.style={circle, draw, fill=white, inner sep=0pt, minimum size=3pt},
			myellipse/.style={draw, ellipse, minimum width=1.5cm, minimum height=3cm, thick}
			]
			
			\filldraw[fill=gray!30, draw=black, thick] (0,0) ellipse (0.9cm and 1.5cm);
			\node at (0,2.0) {$X$};
			\node[dot] (x1) at (0,1.0) {};
			\node[dot] (x2) at (0,0) {};
			\node[dot] (x3) at (0,-1.0) {};
			\node at (0,-0.5) {$\vdots$};
			\node at (0,0.5) {$rb K_{3k-1}$};
			
			\draw[thick] (4,-1.5) ellipse (1.5cm and 0.75cm);
			\node at (6,-1.5) {$Y$};
			\node[dot] (y1) at (3,-1.5) {};
			\node[dot] (y2) at (4,-1.5) {};
			\node[dot] (y3) at (5,-1.5) {};
			\node at (4.5,-1.5) {$\cdots$};
			
			\draw[blue, thick] (y1) to[out=150, in=0, looseness=1.0] (x1);
			\draw[blue, thick] (y1) to[out=160, in=0, looseness=0.9] (x2);
			\draw[blue, thick] (y1) to[out=170, in=0, looseness=1.0] (x3);
			
			\draw[red, thick] (y2) to[out=140, in=10, looseness=1.2] (x1);
			\draw[red, thick] (y2) to[out=160, in=0, looseness=1.0] (x2);
			\draw[red, thick] (y2) to[out=180, in=15, looseness=1.0] (x3);
			\draw[red, thick] (y2) to[out=180, in=0, looseness=1.5] (y1);
			
			\draw[green, thick] (y3) to[out=130, in=30, looseness=1.2] (x1);
			\draw[green, thick] (y3) to[out=150, in=10, looseness=1.0] (x2);
			\draw[green, thick] (y3) to[out=160, in=10, looseness=1.2] (x3);
			
			\draw[green, thick] (y3) to[out=120, in=60, looseness=1.2] (y1);
			
			\draw[green, thick] (y3) to[out=110, in=70, looseness=1.0] (y2);

		\end{tikzpicture}

		\caption{$G_4(n,k)$}
		\label{fig4}
	\end{figure}
    
\end{itemize}

\noindent
{\bf {Proof of Theorem \ref{thm:1}.}}
For three sets $A, B, C$, a triangle $uvwu$ is said to be of type $ABC$ if $u \in A$, $v \in B$, and $w \in C$.
    
For $G_1(n,k)$, any triangles with all edges in $E(K_{|X|} \vee K_{|Y_1|,|Y_2|})$ are of type $XXX$, $XXY_1$, $XXY_2$, or $XY_1Y_2$, so every triangle has at least one vertex in $X$. There are rainbow $(k-2)C_3$ in $(V(K_n), E(K_{|X|} \vee K_{|Y_1|,|Y_2|}))$ because each edge is colored distinctly. In $(V(K_n),E(K_n)\setminus E(K_{|X|} \vee K_{|Y_1|,|Y_2|}))$, one rainbow $C_3$ can be added to rainbow $(k-2)C_3$ to form rainbow $(k-1)C_3$ in $K_n$. This is because all edges in $E(K_n)\setminus E(K_{|X|} \vee K_{|Y_1|,|Y_2|})$ are colored with the left one new color.
	
For $G_2(n,k)$, all triangles with all edges in $E(|Y_1|K_1 \vee (K_{|X|} \cup |Y_2|K_1))$ are of type $XXX$, or $XXY_1$. Thus, every triangle intersects $X$ at least twice, and $(V(K_n), E(|Y_1|K_1 \vee (K_{|X|} \cup |Y_2|K_1)))$ contains rainbow $(k-2)C_3$ because each edge is colored distinctly. In $(V(K_n),E(K_n)\setminus E(|Y_1|K_1 \vee (K_{|X|} \cup |Y_2|K_1)))$, one rainbow $C_3$ can be added to rainbow $(k-2)C_3$'s to form rainbow $(k-1)C_3$ in $K_n$ for the same reason.
	
Similarly, $G_3(n,k)$ has rainbow $(k-1)C_3$ in it.
The reason why $G_4(n,k)$ has rainbow $(k-1)C_3$ but no rainbow $kC_3$ has already been given in Section 3 of \cite{WZLX2023}.
	
When $n$ range from $3k$ to $\frac{-13k^2+25k-8}{8-4k}\approx 3.25k$, $c(G_4(n,k))$ is the most; from about $\frac{-13k^2+25k-8}{8-4k}\approx 3.25k$ to about $4k-6$, $c(G_3(n,k))$ has the maximum value. Then $c(G_3(n,k))$ is larger than $c(G_2(n,k))$ and attains the maximum value until $n=\frac{9k^2-6k}{2k-4}\approx 4.5k$. Finally, $c(G_1(n,k))$ attains the maximum value from $\frac{9k^2-6k}{2k-4}\approx 4.5k$ to infinity. The values of the four functions $c(G_i(n,k))$ are potted in Figure \ref{fig5}, and the thresholds are listed in Table \ref{tab1}.

	\begin{figure}
		\centering
		\includegraphics[width=1\linewidth]{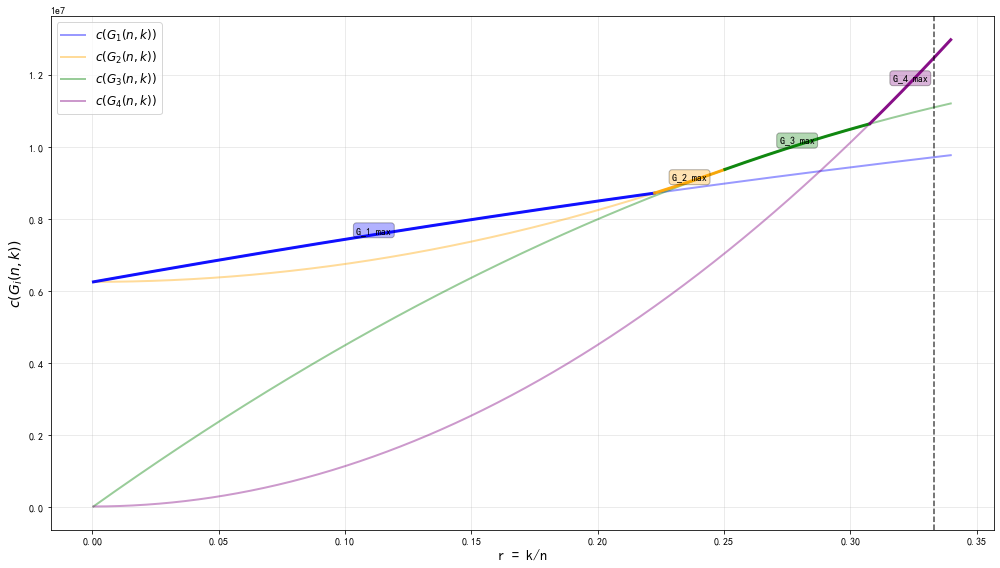}
		\caption{$c(G)$ of the graphs $G_i(n,k)$ where $n$ ranges $3k$ to infinity.}
		\label{fig5}
	\end{figure}
	We can see that $c(G_2(n.k))$ and $c(G_3(n.k))$ are greater than the values in Conjecture \ref{Conj:1} when $n$ is approximately within the range of about $\frac{-13k^2+25k-8}{8-4k}$ to about $\frac{9k^2-6k}{2k-4}$, thus completing the proof of Theorem \ref{thm:1}.
\qed
\begin{table}[h]
	\centering
	\caption{Transitions between $G_i(n,k)$.}
	\begin{tabular}{l >{$}l<{$}}
    \toprule
    \textbf{graph} & \textbf{the range of $n$} \\
    \midrule
    $G_{4}(n,k)$ & 3k \leq n \leq \dfrac{-13k^2+25k-8}{8-4k}\approx 3.25k \\[8pt]
    $G_{3}(n,k)$ & \dfrac{-13k^2+25k-8}{8-4k}\approx 3.25k\  \leq n \leq 4k-6 \\[8pt]
    $G_{2}(n,k)$ & 4k-6 \leq n \leq \dfrac{9k^2-6k}{2k-4}\approx 4.5k  \\[8pt]
    $G_{1}(n,k)$ & n \geq \dfrac{9k^2-6k}{2k-4}\approx 4.5k    \\
    \bottomrule
    \end{tabular}
    \label{tab1}
\end{table}

Motivated by the extremal graphs for vertex-disjoint triangles and Theorem 6 in \cite{ABHP15}, we dare to pose the following conjecture.
\begin{conj}\label{conj2}
		There exists $n_0$ such that for each $n>n_0$ and each $k$, $n \geq 3k$, we have $ar(n,kC_3) = \max_{j \in [4]} c(G_i(n,k))$.
	\end{conj}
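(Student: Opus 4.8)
The lower bound $ar(n,kC_3)\ge \max_{i\in[4]}c(G_i(n,k))$ is exactly what the constructions in the proof of Theorem~\ref{thm:1} provide: each $G_i(n,k)$ is an edge-colouring of $K_n$ with $c(G_i(n,k))$ colours and no rainbow $kC_3$. So the entire content is the matching upper bound, namely that any colouring of $K_n$ with more than $\max_i c(G_i(n,k))$ colours contains a rainbow $kC_3$. The plan is to fix such a colouring $c$ using $N$ colours, pick for each colour one representative edge to form a \emph{rainbow representative subgraph} $R$ with $e(R)=N$, and exploit that $R$ is itself rainbow: any copy of $kC_3$ inside $R$ is already a rainbow $kC_3$. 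Hence $R$ is $kC_3$-free and $N=e(R)\le \ex(n,kC_3)$. This crude bound is not sharp, since $\ex(n,kC_3)$ strictly exceeds $\max_i c(G_i(n,k))$ (the leading $n^2/4$ terms agree but the linear term differs by roughly $n$), so the heart of the matter is to recover this gap by using the edges \emph{outside} $R$, whose colours are forced, by the absence of a rainbow $kC_3$, to repeat colours already present on short ``almost-triangle'' configurations of $R$.

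I would split the range $n\ge 3k$ into two regimes. When $n\ge 15k+27$ the theorem of Lu, Luo, and Ma \cite{LLM25+} gives $ar(n,kC_3)=c(G_1(n,k))$, and by Table~\ref{tab1} the maximum $\max_i c(G_i(n,k))$ is attained by $G_1$ throughout that range, so the conjecture holds there. The new work is the complementary \emph{dense} regime $3k\le n<15k+27$, i.e.\ $k=\Theta(n)$, where one of $G_2,G_3,G_4$ is extremal. In this regime I would feed the $kC_3$-free graph $R$ into the extremal theory of vertex-disjoint triangles of Allen, B\"ottcher, Hladk\'y, and Piguet \cite{ABHP15} together with its stability analogue: a $kC_3$-free graph whose edge count is close to $\ex(n,kC_3)$ must be structurally close (up to $o(n^2)$ edges) to one of their extremal graphs, namely either a join-type graph $K_{s}\vee(\text{triangle-free})$ with $s\approx k-1$, or a clique-type graph $K_{3k-1}\cup(\text{sparse})$. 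This pins down the global shape of $R$, and hence of the colour classes, up to a controlled number of misplaced edges.

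Given the approximate shape, I would run a separate finishing argument for each candidate structure, in each case either producing a rainbow $kC_3$ or proving $N\le c(G_i(n,k))$ exactly. The mechanism is the same closing move throughout: locate in $R$ a copy of $(k-1)C_3\cup P_2$, that is, a $kC_3$ with one edge $uw$ deleted so that the path $uvw$ has $uv,vw\in R$; the edge $uw$ cannot carry a new colour, or we close a rainbow $kC_3$, so its colour lies among the $3k-1$ colours already used on the configuration. Counting, over all such configurations, how many colours are thereby forced to repeat is precisely what upgrades $N\le\ex(n,kC_3)$ to $N\le\max_i c(G_i(n,k))$; the clique-type case is what generates the ``extra'' colours of $G_4(n,k)$ (one new colour per vertex of the sparse part), reflecting that in $G_4$ every triangle meeting the sparse part is forced to be non-rainbow, so a single $(k-1)C_3\cup P_2$-free representative graph cannot realise $c(G_4(n,k))$ colours at all.

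The main obstacle I anticipate is exactly this exact accounting, for three reasons. First, the goal is an equality valid for \emph{all} $k\le n/3$, so asymptotic stability does not suffice: after locating the approximate extremal structure one must control the remaining ``impurity'' edges down to the last colour, and handle the regime transitions of Table~\ref{tab1}, where two of the $c(G_i(n,k))$ are nearly equal and the extremal shape is ambiguous. Second, the clique-type regime underlying $G_3$ and $G_4$ does not reduce to a single Tur\'an problem and so demands a direct colour-class analysis rather than a black-box Tur\'an input. Third, one must verify completeness, i.e.\ that the four listed constructions are the \emph{only} extremal colourings and that no hybrid colouring beats all of them; establishing this, together with choosing the representative subgraph $R$ so that the forced-colour count is maximised, is where I expect the bulk of the technical effort to lie.
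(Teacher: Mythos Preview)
The statement you are attempting to prove is Conjecture~\ref{conj2}, which the paper explicitly poses as an \emph{open conjecture}; the authors do not prove it, and there is no proof in the paper to compare against. Your write-up is therefore not a proof but a research plan, and you correctly identify this yourself in the final paragraph.

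As a plan, the broad strokes are reasonable: the lower bound is indeed given by the constructions, and for $n\ge 15k+27$ the result of Lu--Luo--Ma~\cite{LLM25+} together with Table~\ref{tab1} does settle the conjecture in that range. The genuine gap lies in the dense regime $3k\le n<15k+27$. Here your proposal relies on a stability version of the Allen--B\"ottcher--Hladk\'y--Piguet theorem, but the stability you would need is \emph{exact} (down to a single colour) and must hold uniformly for all $k\le n/3$, including at the transition values where two of the $c(G_i(n,k))$ coincide. No such sharp stability statement is available in~\cite{ABHP15}, and turning $o(n^2)$-stability into an exact result for anti-Ramsey numbers is precisely the missing idea; your ``closing move'' via $(k-1)C_3\cup P_2$ configurations is the standard representative-subgraph trick, but it does not by itself bridge the gap between $\ex(n,kC_3)$ and $\max_i c(G_i(n,k))$ in the clique-dominated regimes of $G_3$ and $G_4$. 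In short, you have restated the difficulty rather than resolved it, which is consistent with the statement's status as a conjecture.
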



\section{Proof of Theorem \ref{Thm: vertexdisj-rainbow-triangles}}
\begin{theorem}\label{Thm: vertexdisj-rainbow-triangles}
Let $n,k$ be two positive integers. Let $G$ be an edge-colored graph of order $n$, where $n\ge 42.5k+48$. If $\delta^c(G)\ge (n+k)/2$ with $\delta^c(G)\ge (n+k)/2$, then $G$ contains $k$ vertex-disjoint rainbow triangles.
\end{theorem}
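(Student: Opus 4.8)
~I would prove the statement by induction on $k$. The base cases $k=1$ and $k=2$ are Theorem~\ref{Thm:Li} and the Hu--Li--Yang theorem (improved in \cite{CLN22}), which apply a fortiori since $n\ge 42.5k+48$ is far above their thresholds. For the inductive step it is enough to exhibit in $G$ a single rainbow triangle $T$ for which
\[
\delta^c(G-V(T))\ \ge\ \frac{(n-3)+(k-1)}{2}\ =\ \frac{n+k}{2}-2 ,
\]
since then $G-V(T)$ has order $n-3\ge 42.5(k-1)+48$ and the induction hypothesis supplies $k-1$ further vertex-disjoint rainbow triangles. Call such a $T$ \emph{efficient}. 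Deleting the three vertices of $T$ lowers the color degree of any $v\notin V(T)$ by at most $3$, and it pushes the minimum color degree below $\tfrac{n+k}{2}-2$ only if some vertex $v$ with $d^c(v)=\delta^c(G)$ (a \emph{minimum vertex}) loses exactly three colors. So the whole theorem reduces to one lemma: every edge-colored $G$ on $N$ vertices with $\delta^c(G)\ge (N+k')/2$, $k'\ge 2$, and $N$ linear in $k'$ (the bookkeeping through the induction turning this into $n\ge 42.5k+48$) contains an efficient rainbow triangle.

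\emph{When is a triangle efficient?} For a vertex $v$ let its \emph{pendant neighbourhood} be $P_v=\{u\in N(v): c(uv)\text{ is the colour of no other edge at }v\}$. A minimum vertex $v\notin V(T)$ loses three colors precisely when all three edges from $v$ to $V(T)$ carry colors otherwise absent at $v$, i.e.\ essentially when $V(T)\subseteq P_v$; the only other possibility is the degenerate one in which some colour class of size two at $v$ has both its edges inside $V(T)$, of which there are $O(1)$ per vertex and which can be killed by a local exchange or folded into the case analysis. Thus an efficient rainbow triangle is one not contained in $P_v$ for any minimum vertex $v$. Two facts guide the search: (i) if every edge of a triangle is pendant at one of its two endpoints, the triangle is automatically rainbow; and (ii) $k'+1\le 2d^c(v)-d(v)\le |P_v|$, while $|P_v|$ can be as large as $d^c(v)$, so minimum vertices with very large pendant neighbourhoods need separate treatment.

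\emph{The dichotomy.} Split the minimum vertices by the size of $P_v$. If $v$ is ``almost fully pendant'' ($|P_v|$ within a bounded amount of $N-1$), then instead of avoiding $v$ we \emph{route the current triangle through $v$}: by (i), almost every pair in $P_v$ completes a rainbow triangle with $v$, so we may choose $T\ni v$, after which $v$ never constrains us again; each extracted triangle absorbs up to three such vertices, so at most $3(k-1)$ of them can be handled this way over the whole induction, which suffices provided there are not too many. If instead every minimum vertex has $|P_v|$ bounded away from $N-1$, then $\overline{P_v}:=V(G)\setminus(\{v\}\cup P_v)$ is not too small, and a counting/averaging argument — lower-bounding the number of rainbow triangles forced by color degree $(N+k')/2$ and showing that only a small fraction of them lie inside some $P_v$ — yields a rainbow triangle escaping $P_v$ for all minimum vertices at once; this is the step that consumes the linear-in-$k'$ slack in $N$ and where the constant $42.5$ (and the additive $48$) get fixed. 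The remaining case, more than $3(k-1)$ almost-fully-pendant minimum vertices, is the structural one: these vertices span a large ``locally rainbow'' set, so by (i) $G$ contains a rainbow-triangle-rich subgraph, and one argues that either this already yields $k$ vertex-disjoint rainbow triangles, or the minimum-color-degree hypothesis forces the colouring to resemble one of the extremal families $G_1(n,k),\dots,G_4(n,k)$ or the Lo--Williams construction \cite{LW-Arxiv-24} — all of which live on $O(k)$ vertices, contradicting $n\ge 42.5k+48$.

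\emph{Main obstacle.} The technical heart is the dichotomy: obtaining a usable lower bound on the number of rainbow triangles in terms of $\delta^c(G)$; controlling pendant-neighbourhood sizes not only at minimum vertices but at \emph{near-minimum} ones (a vertex whose color degree exceeds $\delta^c(G)$ by at most $2$ becomes a minimum vertex after two deletions and must be tracked through the induction); and — hardest of all — showing that an ``efficiency-saturated'' colouring, in which every rainbow triangle damages some minimum vertex, is forced into one of the known extremal shapes and hence cannot occur once $n$ is linear in $k$ with the stated constant. Handling the degenerate colour-coincidence cases uniformly is a further, more routine, obligation.
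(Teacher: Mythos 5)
Your reduction step is arithmetically sound: if an ``efficient'' rainbow triangle $T$ (one whose deletion leaves $\delta^c(G-V(T))\ge \frac{(n-3)+(k-1)}{2}$) exists, then induction on $(n-3,k-1)$ goes through, since $n-3\ge 42.5(k-1)+48$. But the existence of such a $T$ is exactly where the whole difficulty of the theorem sits, and your argument for it is a sketch with genuine gaps, not a proof. Concretely: (a) your use of fact (i) is wrong as applied --- for a pair $u,w\in P_v$ with $uw\in E(G)$, nothing forces $c(uw)\notin\{c(uv),c(vw)\}$, since fact (i) needs the third edge to be pendant at $u$ or at $w$; so ``almost every pair in $P_v$ completes a rainbow triangle with $v$'' is unjustified, and with it the ``route through $v$'' branch. (b) The averaging step (``only a small fraction of the rainbow triangles lie inside some $P_v$'') is asserted with no estimate, yet it is precisely the step that is supposed to produce the constant $42.5$. (c) The saturated case is settled by a stability claim --- that a colouring in which every rainbow triangle damages some (near-)minimum vertex must resemble $G_1(n,k),\dots,G_4(n,k)$ or the Lo--Williams construction --- which is nowhere proved and is also misdirected: $G_1,\dots,G_4$ are colourings of $K_n$ on $n$ vertices, extremal for the anti-Ramsey problem where all $3k$ edges of $kC_3$ must get distinct colours, not for vertex-disjoint rainbow triangles (see the Remark after Theorem 3), so ``all of which live on $O(k)$ vertices, contradicting $n\ge 42.5k+48$'' does not hold. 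Finally, you yourself flag but do not resolve the propagation of near-minimum vertices through the induction. As it stands, the key lemma of your plan is unproven, and it is not even clear that an efficient rainbow triangle always exists without the counting machinery you defer.

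The paper takes a different and more robust route that avoids the efficient-triangle issue entirely: it applies the induction hypothesis to $G$ itself (and to $G-v$ inside Claim \ref{Claim:rt<(3k-3)(n-1)}) to fix a family of $k-1$ vertex-disjoint rainbow triangles with vertex set $V_0$, assumes there is no $k$-th one, and derives a contradiction by counting. The lower bound you list as your first obstacle is available off the shelf: Lemma \ref{Lem: delta^c > (n+k)/2} (via \cite{LNSZ24}) gives $rt(G)\ge \frac{kn(n+k)}{12}$. The upper bound comes from the observation that every rainbow triangle meets $V_0$, together with friendship-subgraph bounds ($RF_s(v)$-freeness for $s\ge 3k-2$, hence $rt(v)\le (3k-3)(n-1)$) and a maximal ``hourglass'' parameter $t$ that controls, class by class ($E_1,\dots,E_5$), how many rainbow triangles can use each vertex or edge of $V_0$; this yields $rt(G)=O(k^2n)+O(k^3)$, which is below $\frac{kn(n+k)}{12}$ once $n\ge 42.5k+48$. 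If you wish to salvage your write-up, the realistic fix is to replace the efficient-triangle reduction and the stability step by such a global counting argument rather than to try to prove the stability claim.
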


\begin{lemma}[\cite{LNSZ24}]\label{Lem: delta^c number of rainbow K3}

Let $G$ be an edge-colored graph on $n$ vertices and $e(G)$ edges, $rt(G)$ the number of rainbow $C_3$ of $G$. Suppose that $\delta^c(G)\geq \frac{n+1}{2}$ and $e(G)$ is minimal subject to $\delta^c(G)$, then

\begin{align*}
    rt(G)\ge \frac{e(G)(2\delta^c(G)-n)}{3}\ge\frac{\delta^c(G)(2\delta^c(G)-n)n}{6}.
\end{align*}
\end{lemma}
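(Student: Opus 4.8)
The plan is to treat the two displayed inequalities separately, since the second follows from the first by a one-line substitution. Observe that for every vertex $v$ one has $\deg(v)\ge d^c(v)\ge \delta^c(G)$, because distinctly colored incident edges are in particular distinct edges; hence $e(G)=\tfrac12\sum_v\deg(v)\ge \tfrac12 n\,\delta^c(G)$. Plugging this into the first inequality, and using $2\delta^c(G)-n\ge 1>0$ (guaranteed by $\delta^c(G)\ge (n+1)/2$), immediately yields $rt(G)\ge \tfrac{\delta^c(G)(2\delta^c(G)-n)n}{6}$. So the whole content lies in proving the first inequality $rt(G)\ge \tfrac{e(G)(2\delta^c(G)-n)}{3}$.

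For the first inequality I would set up the double count $3\,rt(G)=\sum_{uv\in E(G)} t(uv)$, where $t(uv)$ is the number of rainbow triangles containing the edge $uv$, and aim to show that on average $t(uv)\ge 2\delta^c(G)-n$. Fix an edge $uv$ with color $\alpha=c(uv)$. At $u$ choose one incident edge of each color different from $\alpha$; their far endpoints form a set $W_u$ with $|W_u|\ge d^c(u)-1\ge \delta^c(G)-1$, and the edges $uw$ with $w\in W_u$ carry pairwise distinct colors, all different from $\alpha$. Define $W_v$ symmetrically. Since $W_u,W_v\subseteq V(G)\setminus\{u,v\}$, inclusion--exclusion gives $|W_u\cap W_v|\ge |W_u|+|W_v|-(n-2)\ge 2\delta^c(G)-n$. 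Each $w\in W_u\cap W_v$ produces a triangle $uvw$ in which $c(uv)=\alpha$ while $c(uw),c(vw)\ne\alpha$, so the only way $uvw$ fails to be rainbow is the monochromatic coincidence $c(uw)=c(vw)$.

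The hard part will be precisely controlling these monochromatic coincidences, and this is where the hypothesis that $e(G)$ is minimal subject to $\delta^c(G)$ must be used. In an edge-minimal graph realizing a given minimum color degree, deleting any edge drops the color degree of an endpoint, so every edge is the unique edge of its color at (at least) one endpoint; I would fix an orientation sending each edge toward such an endpoint. If $w\in W_u\cap W_v$ is a ``bad'' vertex with $c(uw)=c(vw)=\gamma$, then neither $uw$ nor $vw$ is uniquely colored at $w$, forcing each to be color-critical at its outer endpoint --- that is, $\gamma$ appears exactly once at $u$ (on $uw$) and exactly once at $v$ (on $vw$). I would exploit this rigidity to charge each bad triangle to the color-critical edges it forces and to the surplus $\sum_w(\deg(w)-d^c(w))=2e(G)-\sum_w d^c(w)$, showing that the deficit incurred by bad vertices, summed over all edges, is absorbed so that $\sum_{uv}t(uv)\ge e(G)(2\delta^c(G)-n)$ survives. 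Establishing that the monochromatic coincidences cannot collectively destroy the clean candidate count $2\delta^c(G)-n$ --- rather than the inclusion--exclusion step itself --- is the crux; once it is in place, dividing by $3$ gives the first inequality and with it the lemma.
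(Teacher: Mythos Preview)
This lemma is cited from \cite{LNSZ24}; the present paper gives no proof of it, so there is no in-paper argument to compare your attempt against. Judged on its own, your proposal is a sound outline through the inclusion--exclusion step, but it stops at the decisive point.

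Everything up to and including the identification of bad vertices is correct: the second inequality follows from the first via $e(G)\ge n\,\delta^c(G)/2$; the double count $3\,rt(G)=\sum_{uv}t(uv)$ is the right framework; $|W_u\cap W_v|\ge d^c(u)+d^c(v)-n$ with the only obstruction being $c(uw)=c(vw)$; and edge-minimality does force every edge to be color-unique at an endpoint with $d^c=\delta^c(G)$, so a bad $w$ makes both $uw$ and $vw$ critical at $u,v$, whence $d^c(u)=d^c(v)=\delta^c(G)$. The genuine gap is that the promised charging is never carried out. Writing $\delta=\delta^c(G)$ and $B(uv)$ for the number of bad $w$, what remains after summing your per-edge bound is precisely
\[
\sum_{v}\bigl(d^c(v)-\delta\bigr)\deg(v)\ \ge\ \sum_{uv\in E}B(uv),
\]
and your own structural observation works \emph{against} a naive charging here: every edge $uv$ with $B(uv)>0$ has $d^c(u)=d^c(v)=\delta$, so neither endpoint contributes to the left-hand side, while vertices with $d^c>\delta$ (the only sources of surplus on the left) never occur as $u$ or $v$ in a bad triple. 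The alternative reservoir you mention, $\sum_w(\deg(w)-d^c(w))$, lives at apex vertices $w$, but edge-minimality gives no per-vertex control on it, and a single $w$ with $m$ incident edges of one colour can sit at the apex of up to $\binom{m}{2}$ bad triples while contributing only $m-1$ to that surplus. You have correctly located the crux, but the sentence ``I would exploit this rigidity to charge\ldots'' is exactly where the argument is missing.
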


\begin{lemma}\label{Lem: delta^c > (n+k)/2}
Let $G$ be an edge-colored graph on $n$ vertices and $e(G)$ edges, $rt(G)$ be the number of rainbow $C_3$ of $G$. Suppose that $\delta^c(G)\ge\frac{n+k}{2}$. Then
\begin{align*}
    rt(G)\ge \frac{kn(n+k)}{12}.
\end{align*}    
\end{lemma}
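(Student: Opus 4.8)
The target, Lemma~\ref{Lem: delta^c > (n+k)/2}, is a lower bound on the number $rt(G)$ of rainbow triangles in an edge-colored graph $G$ with $\delta^c(G)\ge(n+k)/2$. The natural strategy is to deduce it from Lemma~\ref{Lem: delta^c number of rainbow K3} (the $\delta^c\ge(n+1)/2$ version due to \cite{LNSZ24}), since the hypothesis here is stronger when $k\ge1$: indeed $(n+k)/2\ge(n+1)/2$. The only subtlety is that Lemma~\ref{Lem: delta^c number of rainbow K3} is stated for a graph whose edge set is \emph{minimal} subject to the color-degree condition, so a small reduction step is needed before one can invoke it.

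First I would pass to a spanning subgraph $G'\subseteq G$ on the same vertex set that still satisfies $\delta^c(G')\ge\frac{n+k}{2}$ (using the same coloring restricted to $E(G')$), chosen with $e(G')$ minimal among all such subgraphs. Deleting edges cannot create new rainbow triangles, so $rt(G)\ge rt(G')$, and it suffices to bound $rt(G')$ from below. Now apply Lemma~\ref{Lem: delta^c number of rainbow K3} to $G'$ with its parameter "$\delta^c$" instantiated at the value $\frac{n+k}{2}$ (note $\delta^c(G')\ge\frac{n+k}{2}\ge\frac{n+1}{2}$, and $e(G')$ is minimal by construction). The lemma gives
\begin{align*}
rt(G)\ \ge\ rt(G')\ \ge\ \frac{\delta^c(G')\,(2\delta^c(G')-n)\,n}{6}\ \ge\ \frac{\tfrac{n+k}{2}\,\bigl(2\cdot\tfrac{n+k}{2}-n\bigr)\,n}{6}\ =\ \frac{(n+k)\,k\,n}{12},
\end{align*}
where in the last inequality I use that the function $t\mapsto t(2t-n)$ is increasing for $t\ge n/2$, so replacing $\delta^c(G')$ by the lower bound $\frac{n+k}{2}$ only decreases the expression. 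This is exactly the claimed bound $rt(G)\ge\frac{kn(n+k)}{12}$.

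**Main obstacle.** There is essentially no serious obstacle; the only points that need care are (i) checking that the minimality reduction is compatible with the hypotheses of Lemma~\ref{Lem: delta^c number of rainbow K3} — in particular that after restricting the coloring, every vertex of $G'$ still sees at least $\frac{n+k}{2}$ distinct colors, which holds by the defining property of $G'$ — and (ii) the monotonicity argument justifying that one may substitute the worst-case value $\delta^c=\frac{n+k}{2}$ into the bound, which is immediate since $2\delta^c-n\ge k>0$ and both factors $\delta^c$ and $2\delta^c-n$ are nondecreasing in $\delta^c$. I would also note explicitly that $rt$ is monotone under edge deletion, which is what licenses $rt(G)\ge rt(G')$. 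Everything else is arithmetic.
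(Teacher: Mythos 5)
Your proposal is correct and follows essentially the same route as the paper: pass to an edge-minimal spanning subgraph $G'$ with $\delta^c(G')\ge\frac{n+k}{2}$, apply Lemma~\ref{Lem: delta^c number of rainbow K3} to $G'$, use monotonicity of $t(2t-n)$ to substitute $\frac{n+k}{2}$, and conclude via $rt(G)\ge rt(G')$. Your write-up is in fact slightly more explicit than the paper's about the minimality and monotonicity details, but the argument is the same.
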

\begin{proof}
We remove edges of $G$ as much as possible so that the resulting spanning subgraph $G'$ satisfies $\delta^c(G')\geq \frac{n+k}{2}$. By Lemma \ref{Lem: delta^c number of rainbow K3},
$rt(G')\geq \frac{\delta^c(G')(2\delta^c(G')-n)n}{6}\geq \frac{kn(n+k)}{12}.$
Observe that $rt(G)\geq rt(G')$. This proves the lemma.
\end{proof}

\noindent\textbf{Proof of Theorem \ref{Thm: vertexdisj-rainbow-triangles}.} 
We prove the theorem by induction on $k$. When $k=1$ and $k=2$, it is reduced to Theorem \ref{Thm:Li} and Hu-Li-Yang's theorem, respectively. Now assume $k\geq 3$ and suppose that the theorem holds for the case $k-1$. We assume that $G$ contains $k-1$ vertex-disjoint rainbow triangles, but no $k$
vertex-disjoint rainbow triangles. We denote by $V_0$ the vertex set of such $k-1$ vertex-disjoint rainbow triangles. 

Now we define some notation used below.
Set $V_0=\bigcup_{i=1}^{k-1}T_i$, where $T_i=\{u_{i,1},u_{i,2},u_{i,3}\}$ is the vertex set of the $i$-th rainbow triangle. Let $V_1=V(G)\setminus V_0$. For $v\in V(G),e\in E(G)$, let $rt(v)$ (resp. $rt(e)$) denote the number of rainbow triangles that contain $v$ (resp. $e$). Similarly, for $v\in V_0,e\in E(G[V_0])$, let $rt_1(v)$ be the number of rainbow $C_3$'s which contain $v$ and two vertices in $V_1$, and $rt_2(e)$ the number of rainbow $C_3$'s which contain $e$ and one vertex in $V_1$. For $s\in\mathbb{N}^+$ and $v\in V(G)$, let $RF_s(v)$ be an edge-colored friendship graph that consists of $s$ triangles with exactly one common vertex $v$, in which each triangle is rainbow.

\begin{claim}\label{Claim: rt(v)>=(s-1)(n-1) RF_s(v)}
Let $v\in V_0$ and $s\in\mathbb{N}^+$. 
\begin{enumerate}
  \item If $rt(v)\ge (s-1)(n-1)+1$, then $G$ contains an $RF_s(v)$.
  \item If $rt_1(v)\ge (s-1)(n-3k+2)$, then $G$ contains an $RF_s(v)$ with $RF_s(v)\cap V_0=\{v\}$.
\end{enumerate}
\end{claim}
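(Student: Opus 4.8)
\textbf{Proof proposal for Claim \ref{Claim: rt(v)>=(s-1)(n-1) RF_s(v)}.}

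The plan is to build the friendship graph $RF_s(v)$ greedily, one rainbow triangle at a time, and bound how many rainbow triangles through $v$ can be ``blocked'' by the vertices already used. For part (1), suppose we have already found $j<s$ pairwise-almost-disjoint rainbow triangles through $v$, i.e. an $RF_j(v)$ on vertex set $\{v\}\cup W$ with $|W|=2j$. Any rainbow triangle counted by $rt(v)$ that cannot be added to extend this to an $RF_{j+1}(v)$ must use at least one vertex of $W$ as its non-$v$ endpoint pair — so it is a rainbow triangle of the form $v x y$ with $\{x,y\}\cap W\neq\emptyset$. The number of such triangles is at most $|W|\cdot(n-1)=2j(n-1)$; actually a cleaner bound is to say each of the $\le 2j$ vertices in $W$ lies in at most $n-1$ triangles through $v$ (pick the third vertex), but since we only need to block the \emph{pair} $\{x,y\}$ we can afford the crude count $2j(n-1)\le 2(s-1)(n-1)$. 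Re-examining the claimed threshold $(s-1)(n-1)+1$ suggests the intended argument is slightly tighter: fix an $RF_{s-1}(v)$ maximal, note the ``forbidden'' third vertices for the last triangle number at most $(s-1)\cdot\text{(something)}$; I would reconcile the exact constant by observing that once $2(s-1)$ vertices are used, each \emph{new} candidate triangle $vxy$ is blocked only if $x$ or $y$ is among them, and a careful double count gives the stated bound. Either way the mechanism is: $rt(v)$ large $\Rightarrow$ a blocked-triangle count cannot absorb all of them $\Rightarrow$ an extending triangle exists.

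For part (2), the argument is the same but restricted to the ``outside'' world $V_1$. We build $RF_j(v)$ with all $2j$ non-$v$ vertices in $V_1$; the candidates we are trying to add are the $rt_1(v)$ rainbow triangles $vxy$ with $x,y\in V_1$. Such a candidate fails to extend only if it reuses one of the $2j\le 2(s-1)$ already-chosen vertices of $W\subseteq V_1$, and for each such vertex the number of rainbow triangles $v x y$ with $x\in W$, $y\in V_1$ is at most $|V_1|=n-|V_0|=n-3(k-1)=n-3k+3$ — or $n-3k+2$ after excluding $y=x$ — so the total blocked count is at most $2(s-1)(n-3k+2)$; again I expect the sharp constant in the hypothesis $(s-1)(n-3k+2)$ comes from counting blocked \emph{triangles} rather than blocked \emph{vertices} (each already-used pair, or each already-used vertex paired with a fixed orientation, kills at most $n-3k+2$ of them), and I would write out that double-counting step carefully. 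In both parts the resulting friendship graph automatically satisfies $RF_s(v)\cap V_0=\{v\}$ in case (2) by construction.

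The main obstacle — and the only place real care is needed — is nailing the exact constant so that the threshold in the hypothesis is exactly what is used later (the induction in the proof of Theorem \ref{Thm: vertexdisj-rainbow-triangles} will presumably plug in a specific $s$ and the slack matters against the $42.5k+48$ bound). Concretely, I need the correct answer to: given an $RF_{s-1}(v)$, how many rainbow triangles through $v$ are \emph{not} vertex-disjoint from it outside $v$? The honest bound is ``at most (number of used vertices) $\times$ (choices for the partner)'', and the subtlety is whether ``used vertices'' is $2(s-1)$ or effectively $(s-1)$ after one notices each triangle $vxy$ is counted once per endpoint, halving the count. I would resolve this by phrasing it as: if $rt(v) > (s-1)(n-1)$ then by pigeonhole some rainbow triangle $vxy$ has \emph{both} $x$ and $y$ outside the $\le 2(s-1)$ used vertices — because the set of triangles meeting the used set has size $\le (s-1)(n-1)$ once we charge each such triangle to a \emph{unique} used vertex (the one with smaller index, say, breaking ties), and there are $2(s-1)$ used vertices but they come in $s-1$ triangles so the charging is to $\le s-1$ ``slots'' each of weight $\le n-1$. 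That pairing/charging step is the crux; everything else is the routine greedy extension.
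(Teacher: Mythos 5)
Your reduction is the right starting point and matches the paper's: each rainbow triangle through $v$ corresponds to an edge of the ``link'' graph $E_v=\{xy: vxy \text{ is a rainbow triangle}\}$, and finding $RF_s(v)$ is exactly finding a matching of size $s$ in $(N_G(v),E_v)$ (resp.\ in $(V_1,E_v)$ for part (2)). The gap is in the counting step you yourself flag as the crux, and your proposed fix does not close it. A greedy/maximal-matching argument only says: if there is no matching of size $s$, then every edge of $E_v$ meets the $2(s-1)$ vertices of a maximum matching, giving $|E_v|\le 2(s-1)(n-2)$ or so --- a factor $2$ above the hypothesis $(s-1)(n-1)+1$, so no contradiction. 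Your charging scheme (``$s-1$ slots, each of weight $\le n-1$'') is not valid: a slot is a \emph{pair} $\{a,b\}$ of used vertices, and the triangles $vxy$ blocked by that slot are those with $x\in\{a,b\}$, of which there can be up to $2(n-2)$, not $n-1$. Charging to slots therefore reproduces the same $2(s-1)(n-2)$-type bound; the factor-$2$ saving you need cannot come from relabelling which vertex gets the charge.

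The paper closes exactly this gap by a different tool: apply the Erd\H{o}s--Gallai theorem on paths to the graph $(N_G(v),E_v)$, which has at most $n-1$ vertices and at least $(s-1)(n-1)+1$ edges, obtaining a path of length at least $\frac{2\bigl((s-1)(n-1)+1\bigr)}{n-1}>2s-2$, hence of length at least $2s-1$; a path with $2s-1$ edges contains a matching of size $s$, and this matching together with $v$ is the desired $RF_s(v)$. (Equivalently one could quote the Erd\H{o}s--Gallai matching theorem, $\mathrm{ex}(N,sK_2)=\max\bigl\{\binom{2s-1}{2},\binom{s-1}{2}+(s-1)(N-s+1)\bigr\}$, which is below the stated threshold; either way, some extremal input beyond the greedy count is needed.) Part (2) is handled the same way inside $V_1$, whose order is $n-3(k-1)=n-3k+3$, yielding the $RF_s(v)$ meeting $V_0$ only in $v$. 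So your overall mechanism (large $rt(v)$ forces an extendable triangle) is sound in spirit, but as written the argument proves the claim only with roughly twice the stated threshold, which is not what the later counting in the proof of Theorem \ref{Thm: vertexdisj-rainbow-triangles} uses.
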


\noindent\textbf{Proof of Claim \ref{Claim: rt(v)>=(s-1)(n-1) RF_s(v)}.} Denote by $E_v=\{uw:u,w\in N_G(v),\mbox{vuwv~is~a~rainbow~triangle}\}$. So, for each $e\in E_v$,  there is a rainbow triangle containing both $v$ and $e$.
If $rt(v)\ge (s-1)(n-1)+1$, 
then $|E_v|\geq (s-1)(n-1)+1$. 
Consider the subgraph $G[E_v]$. We have $v(G[E_v])\leq d_G(v)\leq n-1$. Recall that the Erd\H{o}s-Gallai Theorem on paths states that any graph on $n$ vertices and $m$ edges contains a path of length
at least $\frac{2m}{n}$. Thus, there is a path of length at least $\frac{2((s-1)(n-1)+1)}{n-1}>2s-2$, and therefore of length at least $2s-1$, in $G[E_v]$. Since any path with $t$ edges contains a matching of size $\lceil t/2\rceil$, the graph $G[E_v]$ admits a matching of size at least $s$. This matching together with $v$ yields $RF_s(v)$.

Similarly, if $rt_1(v)\ge (s-1)(n-3k+2)$, then $G_v^1=(V_1, E_v)$ contains a matching of size $s$. These $s$ matching edges together with $v$ produce $RF_s(v)$ intersecting $V_0$ only in $v$. $\hfill\square$

By induction, $G$ has the following property.

\begin{claim}(friendship subgraph)\label{Claim:rt<(3k-3)(n-1)}
For any integer $s\geq 3k-2$ and each $v\in V(G)$, $G$ is $RF_s(v)$-free. 
Moreover, we have $rt(v)\leq (3k-3)(n-1)$ and $rt_1(v)\leq (3k-3)(n-3k+2).$
\end{claim}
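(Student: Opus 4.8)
The plan is to establish the $RF_s(v)$-freeness for \emph{every} $v\in V(G)$ by deleting $v$ and invoking the induction hypothesis on the resulting smaller graph, and then to read off the two numerical bounds directly from Claim~\ref{Claim: rt(v)>=(s-1)(n-1) RF_s(v)}. The key observation is that the induction should be applied to $G-v$ rather than to $G$ itself: the $k-1$ disjoint rainbow triangles already present in $G$ (spanning $V_0$) are of no use when $v\in V_0$, whereas $G-v$ automatically supplies $k-1$ disjoint rainbow triangles that avoid $v$.

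First I would check that $G-v$ satisfies the hypotheses of Theorem~\ref{Thm: vertexdisj-rainbow-triangles} with parameter $k-1$. Deleting a single vertex lowers the color degree of every remaining vertex by at most one, so $\delta^c(G-v)\ge\delta^c(G)-1\ge\frac{n+k}{2}-1=\frac{(n-1)+(k-1)}{2}$; and $n-1\ge 42.5k+47\ge 42.5(k-1)+48$, with room to spare. Since $k\ge 3$, we have $k-1\ge 2$, so the induction hypothesis applies and yields $k-1$ vertex-disjoint rainbow triangles inside $G-v$; let $W$ be the union of their vertex sets, so $|W|=3k-3$ and $v\notin W$. (When $v\notin V_0$ one could instead simply take $W=V_0$ and skip the induction; the inductive step is needed only to cover the case $v\in V_0$.)

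Next, suppose for contradiction that $G$ contains an $RF_s(v)$ for some $s\ge 3k-2$. By definition it consists of $s$ rainbow triangles through $v$ whose petal pairs $\{a_1,b_1\},\dots,\{a_s,b_s\}$ are pairwise disjoint and avoid $v$. Each vertex of $W$ lies in at most one petal pair, so at most $|W|=3k-3$ of these triangles meet $W$; since $s\ge 3k-2>3k-3$, some petal pair $\{a_i,b_i\}$ is disjoint from $W$. Then $\{v,a_i,b_i\}$ is a rainbow triangle disjoint from $W$, and together with the $k-1$ rainbow triangles spanning $W$ it forms $k$ vertex-disjoint rainbow triangles in $G$, contradicting the standing assumption. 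Hence $G$ is $RF_s(v)$-free for all $s\ge 3k-2$ and all $v\in V(G)$; note the same counting works verbatim for any $s\ge 3k-2$, and indeed $RF_s(v)\supseteq RF_{3k-2}(v)$ makes the cases $s>3k-2$ formally redundant.

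Finally I would apply Claim~\ref{Claim: rt(v)>=(s-1)(n-1) RF_s(v)} with $s=3k-2$: part~(1) together with the absence of $RF_{3k-2}(v)$ forces $rt(v)\le(3k-3)(n-1)$, and, for $v\in V_0$, part~(2) together with the absence of an $RF_{3k-2}(v)$ meeting $V_0$ only in $v$ forces $rt_1(v)\le(3k-3)(n-3k+2)$ (in fact strictly less). This completes the claim. I do not anticipate a genuine obstacle: everything reduces to the elementary pigeonhole $|W|<s$ and to verifying that $G-v$ comfortably clears the order and color-degree thresholds needed for the induction hypothesis.
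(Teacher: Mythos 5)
Your proof is correct and follows essentially the same route as the paper: delete $v$, apply the induction hypothesis to $G-v$ (whose color degree drops by at most one, so $\delta^c(G-v)\ge\frac{(n-1)+(k-1)}{2}$) to get $k-1$ disjoint rainbow triangles avoiding $v$, then pigeonhole the $s\ge 3k-2$ petal pairs against the $3k-3$ vertices of these triangles to produce $k$ disjoint rainbow triangles, a contradiction; the numerical bounds then follow from Claim~\ref{Claim: rt(v)>=(s-1)(n-1) RF_s(v)} with $s=3k-2$ exactly as in the paper. Your extra verification of the order threshold $n-1\ge 42.5(k-1)+48$ is a minor detail the paper leaves implicit.
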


\noindent\textbf{Proof of Claim \ref{Claim:rt<(3k-3)(n-1)}.} Assume that there exists a vertex $v\in V(G)$ and an integer $s\ge 3k-2$ such that $RF_s(v)\subseteq G$. 
Let $G_v=G-v$. Observe that removing the vertex $v$ decreases the color degree of any other vertex by at most~1, hence $\delta^c(G_v)\ge\delta^c(G)-1\ge\frac{n+k}{2}-1=\frac{(n-1)+(k-1)}{2}$.
By the induction hypothesis, $G_v$ contains $k-1$ vertex-disjoint rainbow triangles. Denote them by $T_v^j$ ($j=1,...,k-1$), which contains $3k-3$ vertices.
Since $RF_s(v)$ consists of $s$ rainbow triangles containing $v$ and $s \geq 3k-2$, at least one of these rainbow triangles must be vertex-disjoint from each $T_v^j$ ($j\in[k-1]$).
So, there are $k$ vertex-disjoint rainbow triangles in $G$, a contradiction. 
The other conclusion follows from Claim \ref{Claim: rt(v)>=(s-1)(n-1) RF_s(v)}.
$\hfill\square$

It is easy to see that each rainbow triangle must contain at least one vertex of $V_0$.
Since $\binom{3k-3}{3}<\frac{kn(n+k)}{12}$ when $n^2\ge 54k^2$, there exists a rainbow $C_3$ that contains at least one vertex of $V_1$. 

For $t\in [k-1]$ and $\{u_1,...,u_{k-1}\}\subseteq V_0$, similar to the definition of $RF_s(v)$, we denote by $RF_{2,1}(t;u_1,...,u_{k-1})$ a graph which is the vertex-disjoint union of $RF_2(u_i)$ (the so called hourglass) ($i\in[t]$) and $RF_1(u_j)$ (the rainbow triangle) ($j \in [k-1]\setminus[t] $), as illustrated in Figure \ref{Figure: RF_{2,1}(t;v_1,...,v_{k-1})}. Let $t$ be the maximum integer in $[k-1]$ such that $G$ contains a $RF_{2,1}(t;u_1,...,u_{k-1})$. Then, we relabel all vertices such that $V_0=\bigcup_{i=1}^{k-1}T_i=\bigcup_{i=1}^{k-1}\{v_{i,1},v_{i,2},v_{i,3}\}$, where $T_1,...,T_{k-1}$ are $k-1$ vertex-disjoint rainbow $C_3$, $v_{i,1}$ is the common vertex of two triangles of $T_i$, and $v_{i,4}, v_{i,5}$ are the remaining two vertices in $V(RF_2(v_i))-V(T_i)$ ($i\in[t]$).

\begin{figure}[H]
    \begin{center}
        \begin{tikzpicture}[node distance=2cm, auto]

    \coordinate [label=above:{$v_{1,2}$}](v12) at (0.8,4);\fill (v12) circle (2pt);
    \coordinate [label=above:{$v_{1,3}$}](v13) at (2.2,4);\fill (v13) circle (2pt);
        \coordinate [label=left:{$v_{1,1}$}](v11) at (1.5,3);\fill  (v11) circle (2pt);
    \coordinate [label=below:{$v_{1,4}$}](v14) at (0.8,2);\fill (v14) circle (2pt);
    \coordinate [label=below:{$v_{1,5}$}](v15) at (2.2,2);\fill (v14) circle (2pt);

    \coordinate [label=right:{}](d1) at (3,3.5);\draw (d1) circle (1pt);
    \coordinate [label=right:{}](d2) at (3.5,3.5);\draw (d2) circle (1pt);
    \coordinate [label=right:{}](d3) at (4,3.5);\draw (d3) circle (1pt);

    \coordinate [label=above:{$v_{t,2}$}](vt2) at (4.8,4);\fill (vt2) circle (2pt);
    \coordinate [label=above:{$v_{t,3}$}](vt3) at (6.2,4);\fill (vt3) circle (2pt);
        \coordinate [label=left:{$v_{t,1}$}](vt1) at (5.5,3);\fill  (vt1) circle (2pt);
    \coordinate [label=below:{$v_{t,4}$}](vt4) at (4.8,2);\fill (vt4) circle (2pt);
    \coordinate [label=below:{$v_{t,5}$}](vt5) at (6.2,2);\fill (vt5) circle (2pt);

    \coordinate [label=above:{$v_{t+1,2}$}](2vt+1) at (7.5,4);\fill (2vt+1) circle (2pt);
    \coordinate [label=above:{$v_{t+1,3}$}](3vt+1) at (8.9,4);\fill (3vt+1) circle (2pt);
        \coordinate [label=below:{$v_{t+1,1}$}](1vt+1) at (8.2,3);\fill  (1vt+1) circle (2pt);

    \coordinate [label=right:{}](d4) at (9.7,3.5);\draw (d4) circle (1pt);
    \coordinate [label=right:{}](d5) at (10.2,3.5);\draw (d5) circle (1pt);
    \coordinate [label=right:{}](d6) at (10.7,3.5);\draw (d6) circle (1pt);

    \coordinate [label=above:{$v_{k-1,2}$}](2vk-1) at (11.5,4);\fill (2vk-1) circle (2pt);
    \coordinate [label=above:{$v_{k-1,3}$}](3vk-1) at (12.9,4);\fill (3vk-1) circle (2pt);
        \coordinate [label=below:{$v_{k-1,1}$}](1vk-1) at (12.2,3);\fill  (1vk-1) circle (2pt);

    \draw (v12)--(v13)--(v11)--(v12);\draw (v11)--(v14)--(v15)--(v11);
    \draw (vt2)--(vt3)--(vt1)--(vt2);\draw (vt4)--(vt5)--(vt1)--(vt4); 
    \draw (2vt+1)--(1vt+1)--(3vt+1)--(2vt+1);
    \draw (2vk-1)--(1vk-1)--(3vk-1)--(2vk-1); 

\end{tikzpicture}
    \end{center}
    \centering
    \caption{$RF_{2,1}(t;v_{1,1},...,v_{k-1,1})$}
    \label{Figure: RF_{2,1}(t;v_1,...,v_{k-1})}
\end{figure}
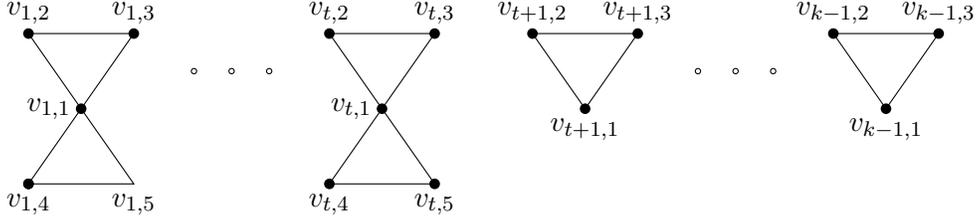
Our main aim is to count $rt(G)$. The main idea is to show that 
$rt(G)<\frac{kn(n+k)}{12}$, which will contradict Lemma \ref{Lem: delta^c > (n+k)/2}. 
To do so, we divide all rainbow triangles into three classes: 
the first class, denoted by $RT_{1}$, consists of those containing exactly two vertices from $V_1$; 
the second class, denoted by $RT_{2}$, consists of those containing at most one vertex of $V_1$, hence each must contain an edge of $G[V_0]$,
and the third class, denoted by $RT_{3}$, consists of those entirely contained in $V_0$.


We use $$E_{1}=\bigcup_{i\in [t],j\in[k-1]\setminus\{i\}}E[v_{i,1},V(T_j)]$$ to denote the subset of edges in $G[V_0]$, in which one end-point of each edge is a center vertex from the first $k$ triangles and the other end-point is from some triangle other than the one contains the center vertex.

We first obtain an upper bound of $|RT_{1}|$. 

\begin{claim}\label{Claim: rt_1(u1)>=,rt_1(u2)<=}
For any $i\in[k-1]$, let $u_1,u_2\in V(T_i)$. If $rt_1(u_1)>n-3k+2$ then $rt_1(u_2)\le4$. Moreover, if $1\le rt_1(u_1)\le n-3k+2$ then $rt_1(u_2)\le 2(n-3k+2)$.
\end{claim}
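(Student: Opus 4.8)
The plan is to turn any large value of $rt_1(u_1)$ into an explicit friendship subgraph centred at $u_1$ that meets $V_0$ only in $u_1$ (via part (2) of Claim~\ref{Claim: rt(v)>=(s-1)(n-1) RF_s(v)}), and then to argue that if $u_2$ — the other vertex of $T_i$ in question, so $u_1\ne u_2$ — also supported many rainbow triangles with both other vertices in $V_1$, one of them could be combined with a triangle of that friendship graph and with the $k-2$ triangles $T_j$, $j\in[k-1]\setminus\{i\}$, to produce $k$ vertex-disjoint rainbow triangles, the configuration assumed absent. The whole argument reduces to two small ``blocking'' counts.

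For the first assertion I would start from $rt_1(u_1)>n-3k+2$. Since $n-3k+2=(2-1)(n-3k+2)$, Claim~\ref{Claim: rt(v)>=(s-1)(n-1) RF_s(v)}(2) with $s=2$ hands me an hourglass $RF_2(u_1)\subseteq G$ with $RF_2(u_1)\cap V_0=\{u_1\}$; write its two rainbow triangles as $u_1a_1b_1$ and $u_1a_2b_2$, where $a_1,b_1,a_2,b_2$ are four distinct vertices of $V_1$. Suppose for contradiction $rt_1(u_2)\ge 5$. Every rainbow triangle counted by $rt_1(u_2)$ has the form $u_2xy$ with $x,y\in V_1$; such a triangle can meet both $\{a_1,b_1\}$ and $\{a_2,b_2\}$ only if $\{x,y\}$ takes exactly one vertex from each pair, and there are only $2\cdot2=4$ such pairs, each lying in at most one triangle through $u_2$. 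So at most four of the $\ge5$ rainbow triangles at $u_2$ meet both halves of the hourglass, and some $u_2xy$ is disjoint from, say, $\{a_1,b_1\}$. Then $u_1a_1b_1$, $u_2xy$ and $\{T_j:j\in[k-1]\setminus\{i\}\}$ are pairwise vertex-disjoint — using that $u_1\ne u_2$ lie in $T_i$, that $V_1\cap V_0=\emptyset$, and that $a_1,b_1,x,y$ are distinct — which gives $k$ vertex-disjoint rainbow triangles, a contradiction. Hence $rt_1(u_2)\le4$.

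For the ``moreover'' part I would instead assume $1\le rt_1(u_1)\le n-3k+2$, which already provides one rainbow triangle $u_1ab$ with $a,b\in V_1$. If $rt_1(u_2)\ge 2(n-3k+2)+1>(3-1)(n-3k+2)$, then Claim~\ref{Claim: rt(v)>=(s-1)(n-1) RF_s(v)}(2) with $s=3$ yields $RF_3(u_2)\subseteq G$ with $RF_3(u_2)\cap V_0=\{u_2\}$, whose three rainbow triangles $u_2x_jy_j$ ($j\in[3]$) use six distinct vertices of $V_1$. Each of $a$ and $b$ lies in at most one of the pairwise disjoint pairs $\{x_j,y_j\}$, so $\{a,b\}$ meets at most two of the three triangles; picking a triangle $u_2x_jy_j$ disjoint from $\{a,b\}$ and adjoining $u_1ab$ and the $T_\ell$, $\ell\ne i$, again produces $k$ vertex-disjoint rainbow triangles, a contradiction. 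Therefore $rt_1(u_2)\le 2(n-3k+2)$.

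I do not anticipate a serious obstacle: with Claim~\ref{Claim: rt(v)>=(s-1)(n-1) RF_s(v)} available, the proof is essentially the two counting observations above (``four forbidden pairs'' and ``at most two blocked triangles'') together with the routine check that the assembled triangle families are genuinely pairwise vertex-disjoint. The only points to be careful about are invoking Claim~\ref{Claim: rt(v)>=(s-1)(n-1) RF_s(v)}(2) with exactly the right parameter ($s=2$, resp.\ $s=3$), so that the thresholds $n-3k+2$ and $2(n-3k+2)$ appear precisely, and using part (2) rather than part (1) of that claim so that the extracted friendship graphs avoid $V_0$ off their centres — without that, the new triangles could clash with the retained $T_j$'s.
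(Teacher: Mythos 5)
Your proof is correct and follows essentially the same route as the paper: extract $RF_2(u_1)$ (resp.\ $RF_3(u_2)$) from Claim~\ref{Claim: rt(v)>=(s-1)(n-1) RF_s(v)}(2), then show that any fifth triangle at $u_2$ (resp.\ a triangle of the $RF_3$ missing $\{a,b\}$) can be combined with a triangle avoiding it and the remaining $T_j$'s to force $k$ vertex-disjoint rainbow triangles. The ``$2\cdot 2=4$ blocking pairs'' count and the ``at most two of three disjoint edges meet $\{a,b\}$'' count are exactly the paper's arguments.
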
 
\noindent\textbf{Proof of Claim \ref{Claim: rt_1(u1)>=,rt_1(u2)<=}.} As $rt_1(u_1)> n-3k+2$, by Claim \ref{Claim: rt(v)>=(s-1)(n-1) RF_s(v)}, $G$ contains a  $RF_2(u_1)$ with the unique vertex $u_1$ in $V_0$. 
Suppose $V(RF_2(u_1))=\{u_1,u_1^1,u_1^2,u_1^3,u_1^4\}$, where both $G[\{u_1,u_1^1,u_1^2\}]$ and $G[\{u_1,u_1^3,u_1^4\}]$ are rainbow $C_3$'s. 
Then, for each rainbow $C_3$, named $T_{u_2}$,
which contains $u_2$ and two vertices in $V_1$, we infer $V(T_{u_2})\cap \{u_1^1,u_1^2\}\neq\emptyset$ and $V(T_{u_2})\cap \{u_1^3,u_1^4\}\neq\emptyset$; otherwise if $V(T_{u_2})\cap \{u_1^1,u_1^2\}=\emptyset$, then $T_{iu_2},G[\{u_1,u_1^1,u_1^2\}],T_1,...,T_{i-1},T_{i+1},...,T_{k-1}$ are $k$ vertex-disjoint rainbow $C_3$'s, a contradiction. Thus, $rt_1(u_2)\le\binom{2}{1}\cdot\binom{2}{1}=4$.

Assume that $rt_1(u_1)\le n-3k+2$. By Claim \ref{Claim: rt(v)>=(s-1)(n-1) RF_s(v)} $(s=3)$, if $rt_1(u_2)>2(n-3k+2)$, then $G$ contains an $RF(u_2)$ in which $u_2$ is the unique vertex of $V_0$. Suppose $e_0=x_1x_2\in E(G[V_1])$ together with $u_1$ forms a rainbow $C_3$. Denote by $e_1,e_2,e_3\in E(RF_3^3(u_2))\bigcap E(G[V_1])$ the disjoint edges in which each edge together with $u_2$ forms a rainbow $C_3$. Then at least one of them contains no $x_1$ and $x_2$. Suppose such an edge is $e_1=y_1y_2$. Then, $G[\{u_1,x_1,x_2\}],G[\{u_2,y_1,y_2\}],T_1,...,T_{i-1},T_{i+1},...,T_{k-1}$ are $k$ vertex-disjoint rainbow $C_3$'s. Thus, $rt_1(u_2)\le 2(n-3k+2)$. $\hfill\square$ 

For $i\in[t]$, we estimate the number of rainbow $C_3$'s containing $v_{i,1}$ and some vertices in $V_1$. Denote by $E_{1,i}=\{e\in E_1:e$ is incident to $v_{i,1}\}$. It is easy to see $\bigcup_{i=1}^t E_{1,i}=E_1$. 
For a rainbow triangle, denote by $T(v_{i,1})$, which contains $v_{i,1}$ and some vertices in $V_1$ (maybe one or two), if it contains two vertices in $V_1$, then it contributes to $rt_1(v_{i,1})$; if it contains two vertices in $V_0$, then it must contain an edge $e\in E_{1,i}$.

We now try to bound $rt_1(v_{i,1})$ by dealing with three cases:
When $1\le rt_1(v_{i,1})\le4$, by Claim \ref{Claim: rt_1(u1)>=,rt_1(u2)<=}, we have the following. 
If $n-3k+2< rt_1(v_{i,2})\le 2(n-3k+2)$, then $rt_1(v_{i,3})\le 4$, and hence $rt_1(v_{i,2})+rt_1(v_{i,3})\le 2(n-3k+2)+4$. 
Otherwise, if $rt_1(v_{i,2})\le n-3k+2$, then 
$rt_1(v_{i,2})+rt_1(v_{i,3})\le 2(n-3k+2)$. 
Then, by Claim \ref{Claim:rt<(3k-3)(n-1)}, we obtain

\begin{align*}
    \sum_{j=1}^3 rt_1(v_{i,j})+\sum_{e\in E_{1,i}}rt_2(e)
    &\le rt_1(v_{i,2})+rt_1(v_{i,3})+rt(v_{i,1})\\
    &\le 3(n-3k+2)+ (3k-3)(n-1)\\
    &=3kn-12k+9.
\end{align*}

When $4<rt_1(v_{i,1})\le n-3k+2$, by Claim \ref{Claim: rt_1(u1)>=,rt_1(u2)<=}, we have $rt_1(v_{i,2})\le n-3k+2$ and $rt_1(v_{i,3})\le n-3k+2$. 
Here we note that if $rt_1(v_{i,2}) > n-3k+2$ or $rt_1(v_{i,3}) > n-3k+2$, then $rt_1(v_{i,1})\le 4$, which is a contradiction. 

By Claim \ref{Claim:rt<(3k-3)(n-1)}, we have
\begin{align*}
    \sum_{j=1}^3 rt_1(v_{i,j})+\sum_{e\in E_{1,i}}rt_2(e)
    &\le rt_1(v_{i,2})+rt_1(v_{i,3})+rt(v_{i,1})\\
    &\le (n-3k+2)+(n-3k+2)+(3k-3)(n-1)\\
    &=(3k-1)(n-1)-6k+6\\
    &=(3k-1)n-9k+7.
\end{align*}

When $rt_1(v_{i,1})> n-3k+2$, by Claim \ref{Claim: rt_1(u1)>=,rt_1(u2)<=}, $rt_1(v_{i,2})\le4$
and $rt_1(v_{i,3})\le4$. Then, by Claim \ref{Claim:rt<(3k-3)(n-1)}, we have
\begin{align*}
    \sum_{j=1}^3 rt_1(v_{i,j})+\sum_{e\in E_{1,i}}rt_2(e)
    &\le rt_1(v_{i,2})+rt_1(v_{i,3})+rt(v_{i,1})\\
    &\le 4+4+(3k-3)(n-1)\\
    &\le (3k-3)(n-1)+8\\
    &=(3k-3)n-3k+11.
\end{align*}

By the above three inequalities, we have

\begin{claim}\label{Claim: 4}
    $\sum_{i=1}^t\sum_{j=1}^3 rt_1(v_{i,j})+\sum_{e\in E_1}rt_2(e)\le t(3kn-12k+9).$
\end{claim}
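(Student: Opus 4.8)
\textbf{Proof proposal for Claim~\ref{Claim: 4}.}
The plan is to sum the three case-bounds established above over all $i\in[t]$, after checking that the three cases are exhaustive and that the dominant one governs the final estimate. For each fixed $i\in[t]$, the preceding analysis has produced an upper bound on the quantity $\sum_{j=1}^3 rt_1(v_{i,j})+\sum_{e\in E_{1,i}}rt_2(e)$ in each of the mutually exclusive regimes $1\le rt_1(v_{i,1})\le 4$, $4<rt_1(v_{i,1})\le n-3k+2$, and $rt_1(v_{i,1})>n-3k+2$ (together with the trivial regime $rt_1(v_{i,1})=0$, in which $rt_1(v_{i,1})$ contributes nothing and the remaining two color degrees are still bounded by $n-3k+2$ each via Claim~\ref{Claim: rt_1(u1)>=,rt_1(u2)<=} applied with the roles of the vertices permuted, or else by Claim~\ref{Claim:rt<(3k-3)(n-1)} directly). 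First I would verify that the bound $3kn-12k+9$ coming from the first regime dominates the other two, i.e.\ that $(3k-1)n-9k+7\le 3kn-12k+9$ and $(3k-3)n-3k+11\le 3kn-12k+9$; the first reduces to $n\ge 3k+2$ and the second to $2n\ge 9k-2$, both of which hold comfortably under the hypothesis $n\ge 42.5k+48$. Hence in every regime we have the uniform bound
\begin{align*}
\sum_{j=1}^3 rt_1(v_{i,j})+\sum_{e\in E_{1,i}}rt_2(e)\le 3kn-12k+9
\end{align*}
for each $i\in[t]$.

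Next I would sum this inequality over $i=1,\dots,t$. The left-hand side becomes $\sum_{i=1}^t\sum_{j=1}^3 rt_1(v_{i,j})+\sum_{i=1}^t\sum_{e\in E_{1,i}}rt_2(e)$, and since $\bigcup_{i=1}^t E_{1,i}=E_1$ (as observed just before Claim~\ref{Claim: 4}), the double sum over the edge sets is at least $\sum_{e\in E_1}rt_2(e)$; here one must be slightly careful, because a single edge of $E_1$ incident to two distinct center vertices $v_{i,1}$ and $v_{i',1}$ could in principle be counted in both $E_{1,i}$ and $E_{1,i'}$. By the definition $E_{1,i}=\{e\in E_1: e\text{ is incident to }v_{i,1}\}$, an edge joining two centers lies in two of the sets, so in fact $\sum_{i=1}^t\sum_{e\in E_{1,i}}rt_2(e)\ge\sum_{e\in E_1}rt_2(e)$, which is the direction we need. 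Therefore
\begin{align*}
\sum_{i=1}^t\sum_{j=1}^3 rt_1(v_{i,j})+\sum_{e\in E_1}rt_2(e)\le\sum_{i=1}^t\left(3kn-12k+9\right)=t(3kn-12k+9),
\end{align*}
which is exactly the asserted inequality.

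The only genuine subtlety — and the step I expect to need the most care — is confirming that the case split on $rt_1(v_{i,1})$ is genuinely exhaustive and that, in each case, the two companion color degrees $rt_1(v_{i,2})$ and $rt_1(v_{i,3})$ really are controlled as claimed by Claim~\ref{Claim: rt_1(u1)>=,rt_1(u2)<=}. In particular, in the middle regime one must rule out $rt_1(v_{i,2})>n-3k+2$: if that held, Claim~\ref{Claim: rt_1(u1)>=,rt_1(u2)<=} applied with $u_1=v_{i,2}$ would force $rt_1(v_{i,1})\le 4$, contradicting $rt_1(v_{i,1})>4$; the same argument bounds $rt_1(v_{i,3})$. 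Once these consistency checks are in place, the remainder is the routine arithmetic comparison of the three linear-in-$n$ bounds carried out above, and the claim follows.
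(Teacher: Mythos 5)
Your proof is correct in substance and follows the paper's own argument exactly: under $n\ge 42.5k+48$ the three case bounds $3kn-12k+9$, $(3k-1)n-9k+7$ and $(3k-3)n-3k+11$ are all at most $3kn-12k+9$, so one sums this uniform bound over $i\in[t]$ and uses $\bigcup_{i\in[t]}E_{1,i}=E_1$ (your remark that an edge joining two centers is counted twice, which only helps the inequality, is the same implicit observation the paper makes). One point to correct: the extra regime $rt_1(v_{i,1})=0$ that you introduce never occurs for $i\in[t]$, because $v_{i,1}$ is the center of the hourglass $RF_2(v_{i,1})$ and $v_{i,4},v_{i,5}\in V_1$, so the rainbow triangle $G[\{v_{i,1},v_{i,4},v_{i,5}\}]$ already forces $rt_1(v_{i,1})\ge 1$; moreover, the justification you sketch for that regime would not suffice as stated, since Claim \ref{Claim: rt_1(u1)>=,rt_1(u2)<=} does not bound both $rt_1(v_{i,2})$ and $rt_1(v_{i,3})$ by $n-3k+2$ (it only forces one of them to be at most $4$ when the other exceeds $n-3k+2$), and Claim \ref{Claim:rt<(3k-3)(n-1)} only gives $(3k-3)(n-3k+2)$, which is far weaker than the bound $3kn-12k+9$ you need. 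Since that regime is vacuous, this does not affect the validity of your argument. Finally, your reductions of the two dominance inequalities contain small arithmetic slips (they reduce to $n\ge 3k-2$ and $3n\ge 9k+2$, not $n\ge 3k+2$ and $2n\ge 9k-2$), but both hold comfortably under the hypothesis on $n$, so nothing changes.
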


\noindent
\textbf{Proof of Claim \ref{Claim: 4}.} For each $i\in[t]$, since $n \ge 42.5k+48$, we know $$\sum_{j=1}^3 rt_1(v_{i,j})+\sum_{e\in E_{1,i}}rt_2(e)\le 3kn-12k+9,$$ and so 

\begin{align*}
    \sum_{i=1}^t\sum_{j=1}^3 rt_1(v_{i,j})+\sum_{e\in E_1}rt_2(e)&\le \sum_{i=1}^t\left(\sum_{j=1}^3 rt_1(v_{i,j})+\sum_{e\in E_{1,i}}rt_2(e)\right)\\
    &\le t(3kn-12k+9).
\end{align*}
$\hfill\square$

Now we estimate $rt_1(v_{i,j})$ for $i\ge t+1$ and $j\in[3]$. 

\begin{claim}\label{Claim: rt1(vi),i>=t+1}
For any $i\in[k-1] \setminus [t]$, we have $$rt_1(v_{i,1})+rt_1(v_{i,2})+rt_1(v_{i,3})\le \max\{2t(n-3k+2)+8,3(n-3k+2)\},$$ and $$\sum_{i=t+1}^{k-1}\sum_{j=1}^3rt_1(v_{i,j})\le \max\{(k-1-t)(2t(n-3k+2)+8),3(k-1-t)(n-3k+2)\}.$$
\end{claim}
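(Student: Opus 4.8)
\textbf{Proof proposal for Claim \ref{Claim: rt1(vi),i>=t+1}.}

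The plan is to fix $i\in[k-1]\setminus[t]$ and bound the three quantities $rt_1(v_{i,1}),rt_1(v_{i,2}),rt_1(v_{i,3})$ simultaneously, using the maximality of $t$ to control the interaction between a would-be hourglass at some vertex of $T_i$ and the hourglasses already present at $v_{1,1},\dots,v_{t,1}$. First I would dichotomize on whether some vertex $u_1\in V(T_i)$ has $rt_1(u_1)>n-3k+2$. If no vertex of $T_i$ exceeds this threshold, then directly $rt_1(v_{i,1})+rt_1(v_{i,2})+rt_1(v_{i,3})\le 3(n-3k+2)$, which is the second term in the max. So assume some $u_1\in V(T_i)$ satisfies $rt_1(u_1)>n-3k+2$; by Claim \ref{Claim: rt(v)>=(s-1)(n-1) RF_s(v)}(2) with $s=2$, $G$ contains an $RF_2(u_1)$ meeting $V_0$ only in $u_1$. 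By Claim \ref{Claim: rt_1(u1)>=,rt_1(u2)<=}, the other two vertices $u_2,u_3$ of $T_i$ then have $rt_1(u_2),rt_1(u_3)\le 4$, so $rt_1(v_{i,1})+rt_1(v_{i,2})+rt_1(v_{i,3})\le rt_1(u_1)+8$, and it remains to show $rt_1(u_1)\le 2t(n-3k+2)$.

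The heart of the argument is this last bound, and I expect it to be the main obstacle. Here I would use the maximality of $t$: if $u_1$ had more than $2t(n-3k+2)$ rainbow triangles each using two $V_1$-vertices, I want to build an $RF_{2,1}(t+1;\cdot)$, contradicting the choice of $t$. The idea is that the $t$ hourglasses $RF_2(v_{1,1}),\dots,RF_2(v_{t,1})$ together with the triangles $T_j$ ($j\in[k-1]\setminus[t]$, $j\ne i$) occupy a bounded number of vertices in $V_1$; specifically the hourglasses use $2t$ vertices of $V_1$ (their non-center vertices $v_{\ell,4},v_{\ell,5}$ need not lie in $V_1$ in general — this is a subtlety I must check, and if necessary I re-select the hourglasses so their leaf vertices lie in $V_1$, which is exactly what Claim \ref{Claim: rt(v)>=(s-1)(n-1) RF_s(v)}(2) provides). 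Each rainbow triangle through $u_1$ with both other vertices in $V_1$ that is \emph{blocked} from extending our $RF_{2,1}(t;\cdot)$-structure to an $RF_{2,1}(t+1;\cdot)$ must reuse one of these $O(t)$ forbidden vertices; a counting argument (each forbidden vertex kills at most $n-3k+2$ such triangles, since the third vertex ranges over $V_1$) gives $rt_1(u_1)\le 2t(n-3k+2)$. I should be careful that when we promote $T_i$ to an hourglass at $u_1$ we still have $k-1$ vertex-disjoint triangles plus the extra one, i.e. the new structure is genuinely an $RF_{2,1}(t+1;\cdot)$ after possibly reindexing — this uses that $u_1\in V(T_i)$ so $T_i$ itself survives inside the new hourglass.

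Finally, combining the two cases gives the per-index bound $rt_1(v_{i,1})+rt_1(v_{i,2})+rt_1(v_{i,3})\le\max\{2t(n-3k+2)+8,\,3(n-3k+2)\}$, and summing over the $k-1-t$ indices $i\in[k-1]\setminus[t]$ yields the second displayed inequality,
\[
\sum_{i=t+1}^{k-1}\sum_{j=1}^3 rt_1(v_{i,j})\le\max\{(k-1-t)(2t(n-3k+2)+8),\,3(k-1-t)(n-3k+2)\}.
\]
The hypothesis $n\ge 42.5k+48$ is not needed for this claim itself; it will be consumed later when these bounds, together with Claims \ref{Claim: 4} and \ref{Claim:rt<(3k-3)(n-1)} and the count of $|RT_2|,|RT_3|$, are assembled into the final inequality $rt(G)<\frac{kn(n+k)}{12}$ contradicting Lemma \ref{Lem: delta^c > (n+k)/2}.
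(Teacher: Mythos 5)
Your proposal is correct and takes essentially the same route as the paper: by maximality of $t$, every rainbow triangle through a vertex of $T_i$ ($i>t$) with its other two vertices in $V_1$ must meet the $2t$ hourglass leaves $\bigcup_{\ell=1}^{t}\{v_{\ell,4},v_{\ell,5}\}$ (which, under the paper's relabeling, do lie in $V_1$), and each such leaf lies in at most $n-3k+2$ of these triangles, giving the bound $2t(n-3k+2)$. The subsequent case split via Claim \ref{Claim: rt_1(u1)>=,rt_1(u2)<=} (the $\le 4$ bounds versus the all-small case $3(n-3k+2)$) and the summation over $i\in[k-1]\setminus[t]$ are exactly the paper's argument.
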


\noindent\textbf{Proof of Claim \ref{Claim: rt1(vi),i>=t+1}.} 
For each $j\in[3]$, let $T(v_{i,j})$ be a rainbow $C_3$ containing $v_{i,j}$ and two vertices of $V_1$. 
We claim that \[V(T(v_{i,j}))\bigcap(\bigcup_{s=1}^t\{v_{s,4},v_{s,5}\})\neq \emptyset;\]
Otherwise, we obtain a $RF_{2,1}(t+1;v_{1,1},...,v_{t+1,j},...)$, contradicting the maximality of $t$.
There are at most $2t$ vertices in the set $\bigcup_{s=1}^t\{v_{s,4},v_{s,5}\}$. 
The maximum number of rainbow $C_3$'s which contain some vertices in $\bigcup_{s=1}^{t}\{v_{s,4},v_{s,5}\}$ is at most $2t(n-3k+2)$. Thus, $rt_1(v_{i,j})< 2t(n-3k+2).$ 

If there exists some $j\in[3]$ (assume $j=1$) such that $rt_1(v_{i,1})> n-3k+2$, then by Claim \ref{Claim: rt_1(u1)>=,rt_1(u2)<=}, we have $$\sum_{j=1}^3 rt_1(v_{i,j})\le 2t(n-3k+2)+4+4;$$ 
if $rt_1(v_{i,j})<n-3k+2$ for each $j\in[3]$, then $\sum_{j=1}^3 rt_1(v_{i,j})<3(n-3k+2)$.  Thus, $$\sum_{i=t+1}^{k-1}\sum_{j=1}^3 rt_1(v_{i,j})\le \max\{(k-1-t)(2t(n-3k+2)+8),(k-1-t)3(n-3k+2)\}.$$ $\hfill\square$

Next, we obtain an upper bound of $|RT_2|$. For each $e\in E(G[V_0])$ and $v\in V_1$, we denote by $RT(e,v)$ the rainbow $C_3$ that contains $e$ and $v$. To estimate $|RT_2|$, we divide the edges in $G[V_0]$ into five classes. 

Recall $$E_{1}=\bigcup_{i\in [t],j\in[k-1]\setminus\{i\}}E[v_{i,1},V(T_j)].$$ 
We use
$$E_2=\bigcup_{i\in[t],j\in[k-1]\setminus[t]}E[V(T_i)-v_{i,1},V(T_j)]$$
to denote the edges joining one non center-vertex from the first $t$ rainbow triangles and one vertex from last $k-t-1$ triangles;
$$E_3=\bigcup_{t+1\le i<j\le k-1}E[V(T_i),V(T_j)]$$
to denote all edges between any two triangles from the last $k-t-1$ vertex disjoint rainbow triangles;
$$E_4=\bigcup_{i=1}^{k-1}E(T_i)$$ 
to denote the edges within all these $k-1$ vertex disjoint rainbow triangles;
and $$E_5=\bigcup_{1\le i<j\le t}E[V(T_i)-v_{i,1},V(T_j)-v_{j,1}]$$
to denote the edges between all non-center vertices of the first $t$ vertex-disjoint rainbow triangles.
Obviously, $E_i$'s are pairwise disjoint, and $E(G[V_0])=\bigcup_{i=1}^5 E_i$.

\begin{claim}\label{Claim: rt(e) E3}
    $\sum_{e\in E_3}rt_2(e)\le 6\binom{k-1-t}{2}(n-3k+3)$.
\end{claim}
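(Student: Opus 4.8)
The plan is to prove the claim one pair of triangles at a time. Fix $i<j$ with $i,j\in[k-1]\setminus[t]$ and write $T_i=\{x_1,x_2,x_3\}$, $T_j=\{y_1,y_2,y_3\}$, so that $E[V(T_i),V(T_j)]$ is a copy of $K_{3,3}$ with edge set $\{x_py_q:1\le p,q\le 3\}$. Every rainbow $C_3$ contributing to some $rt_2(e)$ with $e\in E_3$ has the form $x_pvy_q$ with $v\in V_1$, and it contributes to $rt_2$ of exactly one edge of $E_3$, namely $x_py_q$ (its other two edges meet $V_1$, hence lie outside $E(G[V_0])$); moreover the edge sets $E[V(T_i),V(T_j)]$ over the $\binom{k-1-t}{2}$ pairs are pairwise disjoint. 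Hence it suffices to show, for each such pair, that the number $N_{ij}:=\sum_{e\in E[V(T_i),V(T_j)]}rt_2(e)$ is at most $6(n-3k+3)$; summing then gives the claim.

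To bound $N_{ij}$ I would decompose the $K_{3,3}$ on $V(T_i)\cup V(T_j)$ into three perfect matchings $M_1,M_2,M_3$ (the three ``diagonals''), so that $N_{ij}=\sum_{a=1}^3\sum_{e\in M_a}rt_2(e)$, and then prove, for each matching $M_a=\{e_1,e_2,e_3\}$, the local bound $\sum_{b=1}^3 rt_2(e_b)\le 2(n-3k+3)$. Adding the three local bounds gives $N_{ij}\le 6(n-3k+3)$, which is exactly where the constant improves from the trivial $9$ to $6$.

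For the local bound I would argue by contradiction. Note first the trivial estimate $rt_2(e)\le|V_1|=n-3k+3$, since the $V_1$-vertex of a triangle counted by $rt_2(e)$ determines it. Suppose $\sum_{b=1}^3 rt_2(e_b)>2(n-3k+3)$, and for $b\in\{1,2,3\}$ let $S_b\subseteq V_1$ be the set of $v$ with $RT(e_b,v)$ rainbow, so $|S_b|=rt_2(e_b)$. A short case analysis, using $|S_b|\le n-3k+3$ and $n-3k+3\ge 3$, verifies Hall's condition for $S_1,S_2,S_3$: if some $S_b=\varnothing$, or two of the $S_b$ coincide in a single vertex, or $S_1\cup S_2\cup S_3$ has size at most $2$, then $\sum_b rt_2(e_b)\le 2(n-3k+3)$, a contradiction. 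Hence there is a system of distinct representatives $v_1\in S_1$, $v_2\in S_2$, $v_3\in S_3$. Since $\{e_1,e_2,e_3\}$ is a perfect matching of $K_{3,3}$, the triangles $RT(e_1,v_1),RT(e_2,v_2),RT(e_3,v_3)$ use three distinct vertices of $T_i$, three distinct vertices of $T_j$, and three distinct vertices of $V_1$; hence they are pairwise vertex-disjoint and also disjoint from every $T_\ell$ with $\ell\in[k-1]\setminus\{i,j\}$. Together with those $k-3$ rainbow triangles they form $k$ vertex-disjoint rainbow triangles, contradicting the assumption that $G$ contains no $k$ vertex-disjoint rainbow triangles.

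The perfect-matching decomposition and the Hall-type case-check are routine, and the threshold $n\ge 42.5k+48$ makes $|V_1|=n-3k+3$ far larger than every small constant that appears, so all the inequalities hold with room to spare. The one point that genuinely needs care — and the only place where the structure is really used — is confirming that the three triangles extracted from the system of distinct representatives are pairwise vertex-disjoint and avoid the remaining $T_\ell$; this is precisely what forces us to count along a perfect matching and to insist the representatives be distinct. I do not expect any essential obstacle beyond this bookkeeping.
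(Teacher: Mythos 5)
Your proof is correct and follows essentially the same route as the paper: decompose the $K_{3,3}$ between each pair of triangles into three perfect matchings and show each matching contributes at most $2(n-3k+3)$ by extracting three pairwise vertex-disjoint rainbow triangles with distinct $V_1$-representatives, contradicting the assumption that $G$ has no $k$ vertex-disjoint rainbow triangles. The only cosmetic difference is that you verify Hall's condition to get the system of distinct representatives, whereas the paper runs a direct case split (if $rt_2(e_1)\ge 3$ and $rt_2(e_2)\ge 2$ then $rt_2(e_3)=0$) to reach the same bound $\sum_{b}rt_2(e_b)\le 2(n-3k+3)$.
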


\noindent\textbf{Proof of Claim \ref{Claim: rt(e) E3}.} For each $t_1,t_2\in[k-1]$ $(t_1< t_2)$, denote by $e_j$ ($j=1,2,3$) three disjoint edges which have end-vertices between $V(T_{t_1})$ and $V(T_{t_2})$. If two of the three edges, say $e_1$ and $e_2$, satisfy that $rt_2(e_1)\ge3$ and $rt_2(e_2)\ge2$, suppose that $G$ contains $RT(e_1,x_i)$ and $RT(e_2,y_j)$ with $i=1,2,3$ and $j=1,2$, then $rt_2(e_3)=0$. 
Otherwise, assume $G$ contains $RT(e_3,z)$. Since $rt_2(e_1)\ge 3$ and $rt_2(e_2)\ge 2$, we may choose $x_{i_0},y_{j_0},z$ to be distinct. Hence, $RT(e_1,x_{i_0}),RT(e_2,y_{j_0}),RT(e_3,z)$ together with $T_1,\dots,T_{t_1-1},T_{t_1+1},\dots,T_{t_2-1},T_{t_2+1},\dots,T_{k-1}$ form $k$ vertex-disjoint rainbow $C_3$'s, a contradiction.
Thus, $$\sum_{j=1}^3rt_2(e_j)\le \max\{2(n-3k+3),3\cdot 2\} = 2(n-3k+3).$$ 
The nine edges in $G[V(T_{t_1}),V(T_{t_2})]$ can be partitioned into three sets, in which each consists of three pairwise disjoint edges. Denote by $E_3^{i,j}$ the edges in $E_3$ whose end vertices lie in $T_{i}$ and $T_j$. Then for $t+1\le i<j\le k-1$, we have $\sum_{e\in E_3^{i,j}}rt_2(e)\le 3\cdot 2(n-3k+3)$, thus 
\begin{align*}
    \sum_{e\in E_3}rt_2(e)&=\sum_{t+1\le i<j\le k-1}\sum_{e\in E_3^{i,j}}rt_2(e)\\
    &\le\binom{k-1-t}{2}\cdot 6(n-3k+3).
\end{align*}
$\hfill\square$

\begin{claim}\label{Claim: E2}
    $\sum_{e\in E_2}rt_2(e)\le t(k-1-t)(3(n-3k+3)+9).$
\end{claim}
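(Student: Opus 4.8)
The plan is to group the edges of $E_2$ according to which pair of triangles they run between, and bound each group separately. Write $E_2^{i,j}=E\bigl[\{v_{i,2},v_{i,3}\},V(T_j)\bigr]$ for $i\in[t]$ and $j\in[k-1]\setminus[t]$, so that $E_2$ is the disjoint union of the $t(k-1-t)$ sets $E_2^{i,j}$, each of size at most $6$. Since $|V_1|=n-3k+3$ and the $V_1$-vertex of any rainbow $C_3$ counted by $rt_2(e)$ ranges over $V_1$, we always have the trivial bound $rt_2(e)\le n-3k+3$. It therefore suffices to show, for every such pair $(i,j)$, that $\sum_{e\in E_2^{i,j}}rt_2(e)\le 3(n-3k+3)+9$; summing over the $t(k-1-t)$ pairs then gives the claim.

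The key step is an obstruction coming from the hourglass $RF_2(v_{i,1})$. First I would record that its two extra vertices $v_{i,4},v_{i,5}$ lie in $V_1$: they belong to no $T_\ell$ because $RF_{2,1}(t;v_{1,1},\dots,v_{k-1,1})$ is a vertex-disjoint union. Then I would prove: if $e=v_{i,2}w$ and $e'=v_{i,3}w'$ are two vertex-disjoint edges of $E_2^{i,j}$ (so $w\ne w'$ are two vertices of $T_j$) with $rt_2(e)\ge 4$ and $rt_2(e')\ge 4$, then $G$ has $k$ vertex-disjoint rainbow triangles, a contradiction. To see this, set $Z_e=\{z\in V_1:v_{i,2}wz\text{ is a rainbow }C_3\}$ and $Z_{e'}$ analogously, so $|Z_e|,|Z_{e'}|\ge 4$; choose $z\in Z_e\setminus\{v_{i,4},v_{i,5}\}$ and then $z'\in Z_{e'}\setminus\{v_{i,4},v_{i,5},z\}$, both possible since we delete at most $3<4$ vertices. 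Then $v_{i,1}v_{i,4}v_{i,5}$ (the second triangle of the hourglass), $v_{i,2}wz$, $v_{i,3}w'z'$, together with $T_\ell$ for $\ell\in[k-1]\setminus\{i,j\}$, are $k$ pairwise vertex-disjoint rainbow triangles — one verifies disjointness from $z\ne z'$, $w\ne w'$, $z,z'\notin\{v_{i,4},v_{i,5}\}$, $z,z'\in V_1$, and the fact that among $T_1,\dots,T_{k-1}$ these new triangles only touch $T_i$ and $T_j$.

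Granting this obstruction, put $S^{i,j}=\{e\in E_2^{i,j}:rt_2(e)\ge 4\}$. Any two vertex-disjoint edges of $E_2^{i,j}$ automatically have the form $\{v_{i,2}w,v_{i,3}w'\}$ with $w\ne w'$ (one endpoint in $\{v_{i,2},v_{i,3}\}$, the other in $V(T_j)$, and these sides are disjoint), so the obstruction forces $S^{i,j}$ to contain no two vertex-disjoint edges, i.e.\ to be an intersecting family; in a bipartite graph such a family is a star, hence $|S^{i,j}|\le 3$ (a star on the side $\{v_{i,2},v_{i,3}\}$ has at most $3$ edges, on the side $V(T_j)$ at most $2$). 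Using $rt_2(e)\le n-3k+3$ for $e\in S^{i,j}$, $rt_2(e)\le 3$ for $e\in E_2^{i,j}\setminus S^{i,j}$, and that $s\mapsto s(n-3k)+18$ is non-decreasing for $n\ge 3k$, we obtain
\[
\sum_{e\in E_2^{i,j}}rt_2(e)\le |S^{i,j}|(n-3k+3)+(6-|S^{i,j}|)\cdot 3\le 3(n-3k+3)+9 ,
\]
and summing over all $t(k-1-t)$ blocks finishes the proof.

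The main point requiring care is the disjointness bookkeeping in the key step: one must make sure the three newly built triangles avoid one another and avoid the hourglass vertices $v_{i,4},v_{i,5}$. This is exactly why the threshold $4$ (rather than $2$ or $3$) is the right one — an edge may have up to two of its rainbow extensions ``wasted'' on $v_{i,4}$ and $v_{i,5}$ — and it is what produces the constant $+9$ in the bound. Everything else (the block decomposition, the trivial bound $rt_2(e)\le n-3k+3$, the intersecting-family count, and the final arithmetic) is routine.
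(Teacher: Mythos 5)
Your proof is correct and takes essentially the same approach as the paper: the key step (two vertex-disjoint edges between $V(T_i)-v_{i,1}$ and $V(T_j)$ both with $rt_2\ge 4$ would, together with the spare rainbow triangle $v_{i,1}v_{i,4}v_{i,5}$ of the hourglass, produce $k$ vertex-disjoint rainbow triangles) is exactly the paper's replacement argument. The only difference is the final bookkeeping: the paper partitions the six edges of each block into three disjoint pairs and bounds each pair by $(n-3k+3)+3$, while you bound the set of heavy edges via the intersecting-family/star observation; both yield the same $3(n-3k+3)+9$ per block and hence the claimed bound.
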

\noindent\textbf{Proof of Claim \ref{Claim: E2}.} For $e\in E_2$, let $t_1\in [t]$ and $t_2\in [k-1]\setminus[t]$. 
Denote by $e_1=u_{1,1}u_{1,2}$ and $e_2=u_{2,1}u_{2,2}$ two disjoint edges between $T_{t_1}-v_{t_1,1}$ and $T_{t_2}$. Then, either $rt_2(e_1)\le 3$ or $rt_2(e_2)\le3$; otherwise, suppose $rt_2(e_1)\ge4$ and $rt_2(e_2)\ge 4$, denote by $N_{2,1}(e_j)$ $(j=1,2)$ the set of vertices in $V_1$ in which each together with $e_j$ forms a rainbow $C_3$. Then, we can choose $w_1\in N_{2,1}(e_1) \setminus \{v_{t_1,4},v_{t_1,5}\}$ and $w_2\in N_{2,1}(e_2) \setminus \{v_{t_1,4},v_{t_1,5},w_1\}$, and obtain three disjoint rainbow $C_3$'s: $RT(e_1,w_1), RT(e_2,w_2)$, $G[\{v_{t_1,1},v_{t_1,4},v_{t_1,5}\}]$, which are disjoint with $T_1,...,T_{t_1-1},T_{t_1+1},...,T_{k-1}$, a contradiction. Thus,$$rt_2(e_1)+rt_2(e_2)\le (n-3k+3)+3,$$ 
and $\sum_{e\in E[T_{t_1}-v_{t_1,1},T_{t_2}]}rt_2(e)\le 3((n-3k+3)+3)$. Since $t_1\in[t]$, $t_2\in [k-1] \setminus [t]$, we finish the proof. $\hfill\square$

\begin{claim}\label{Claim E5}
    $\sum_{e\in E_5} rt_2(e)\le 16\cdot\binom{t}{2}$.
\end{claim}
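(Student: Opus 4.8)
The plan is to split the sum over the $\binom{t}{2}$ pairs among the first $t$ triangles and bound each piece. For $1\le i<j\le t$ put $E_5^{i,j}=E[V(T_i)-v_{i,1},V(T_j)-v_{j,1}]$, so $E_5=\bigcup_{1\le i<j\le t}E_5^{i,j}$ is a disjoint union with $|E_5^{i,j}|\le 4$, since these are precisely the edges between the two-element sets $\{v_{i,2},v_{i,3}\}$ and $\{v_{j,2},v_{j,3}\}$. It therefore suffices to establish the pointwise estimate $rt_2(e)\le 4$ for every $e\in E_5$; summing then yields $\sum_{e\in E_5}rt_2(e)\le\sum_{1\le i<j\le t}4\cdot 4=16\binom{t}{2}$.

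For the pointwise estimate I would argue by contradiction. Fix $i<j$ in $[t]$ and an edge $e=v_{i,a}v_{j,b}$ with $a,b\in\{2,3\}$, and suppose $rt_2(e)\ge 5$. Then there are at least five vertices $w\in V_1$ for which $G[\{v_{i,a},v_{j,b},w\}]$ is a rainbow $C_3$, so one such $w$ can be chosen outside $\{v_{i,4},v_{i,5},v_{j,4},v_{j,5}\}$, a set of size at most four. The key point is that the hourglasses $RF_2(v_{i,1})$ and $RF_2(v_{j,1})$ guaranteed by the choice of $t$ furnish the rainbow triangles $G[\{v_{i,1},v_{i,4},v_{i,5}\}]$ and $G[\{v_{j,1},v_{j,4},v_{j,5}\}]$, which cover the centres $v_{i,1},v_{j,1}$ using only $V_1$-vertices other than $w$. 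Together with $G[\{v_{i,a},v_{j,b},w\}]$ and the $k-3$ triangles $T_s$ for $s\in[k-1]\setminus\{i,j\}$, this produces $k$ rainbow triangles; I then check they are pairwise vertex-disjoint, which contradicts the standing assumption that $G$ has no $k$ vertex-disjoint rainbow triangles and forces $rt_2(e)\le 4$.

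The only part needing care is the vertex-disjointness of the rebuilt family, and this is where the set-up does the work: the triangles $T_s$ ($s\notin\{i,j\}$) are mutually disjoint by the definition of $V_0$; the two hourglass halves are disjoint from each other and from every $T_s$ because $RF_{2,1}(t;v_{1,1},\dots,v_{k-1,1})$ is a vertex-disjoint union (in particular $v_{i,4},v_{i,5},v_{j,4},v_{j,5}\in V_1$); and $G[\{v_{i,a},v_{j,b},w\}]$ meets $V_0$ only inside $(T_i\cup T_j)\setminus\{v_{i,1},v_{j,1}\}$ and meets $V_1$ only in the vertex $w$, chosen outside $\{v_{i,4},v_{i,5},v_{j,4},v_{j,5}\}$. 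This is the same ``repackage $T_i$ and $T_j$, then absorb a rainbow triangle through $e$'' template already used in Claims \ref{Claim: rt(e) E3} and \ref{Claim: E2}, so I anticipate no genuine obstacle here --- the content is bookkeeping rather than a new idea.
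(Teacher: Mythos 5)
Your proof is correct and follows the paper's own argument essentially verbatim: the pointwise bound $rt_2(e)\le 4$ is obtained by choosing $w\in V_1\setminus\{v_{i,4},v_{i,5},v_{j,4},v_{j,5}\}$ and swapping $T_i,T_j$ for the two outer hourglass triangles $G[\{v_{i,1},v_{i,4},v_{i,5}\}]$, $G[\{v_{j,1},v_{j,4},v_{j,5}\}]$ together with $RT(e,w)$, and then summing $4\cdot 4$ over the $\binom{t}{2}$ pairs. No differences worth noting.
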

\noindent\textbf{Proof of Claim \ref{Claim E5}.} Let $1\le t_1< t_2\le t$.  For each edge $e=u_1u_2$ with $u_i\in V(T_i)-\{v_{t_i,1}\}$, if $rt_2(e)\ge5$, then there exists $u\in V_1-\{v_{t_1,4},v_{t_1,5},v_{t_2,4},v_{t_2,5}\}$. Replacing $T_{t_1}$ and $T_{t_2}$ by three rainbow triangles $G[\{v_{t_1,1},v_{t_1,4},v_{t_1,5}\}]$,$G[\{v_{t_2,1},v_{t_2,4},v_{t_2,5}\}]$, $RT(e,u)$, we obtain $k$ vertex-disjoint rainbow triangles. Thus, $rt_{2,1}(u_1u_2)\le4$. It follows $$\sum_{\substack{u_1\in V(T_{t_1})-v_{t_1,1}\\u_2\in V(T_{t_2})-v_{t_2,1}}}rt_2(u_1u_2)\le\binom{2}{1}\binom{2}{1}\cdot4.$$ 
Since there are $\binom{t}{2}$ such pairs $(t_1,t_2)$, 
summing over all of them yield
\[\sum_{e\in E_5} rt_2(e) \;\le\; 16\binom{t}{2}.\]
$\hfill\square$

We can now count $rt(G)$ as follows.
\begin{align*}
rt(G)&\le|RT_1|+|RT_2|+|RT_3|\\
     &\le  \sum_{i=1}^{k-1}\sum_{j
     =1}^3 rt_{1}(v_{i,j})
     +  \sum_{e\in E(G[V_0])}rt_{2}(e)
     +  \binom{3k-3}{3}\\
     &\le \{\textcolor{blue}{\sum_{i=1}^{t}\sum_{j=1}^3 rt_{1}(v_{i,j})}
     +  \textcolor{orange}{\sum_{i=t+1}^{k-1}\sum_{j=1}^3 rt_{1}(v_{i,j})}\} \\
     &\quad+ \{\textcolor{blue}{\sum_{e\in E_1}rt_{2}(e)}  
     +   \textcolor{purple}{\sum_{e\in E_2}rt_{2}(e)}  
     +  \textcolor{red}{\sum_{e\in E_3} rt_{2}(e)} 
     +  \sum_{e\in E_4} rt_{2}(e)
     +  \textcolor{green}{\sum_{e\in E_5}rt_{2}(e)}   \}\\ 
     &\quad   
     +   \binom{3k-3}{3}:=A_1.
\end{align*}

By Claim \ref{Claim: 4}, \ref{Claim: E2}, \ref{Claim: rt(e) E3}, \ref{Claim E5} and Claim \ref{Claim: rt1(vi),i>=t+1}, we have an upper bound for the \textcolor{blue}{blue}, \textcolor{purple}{purple}, \textcolor{red}{red}, \textcolor{green}{green} and \textcolor{orange}{orange} term, respectively. There also holds $\sum_{e\in E_4} rt_2(e)\le \sum_{i=1}e(T_i)(n-3k+3)\le(3k-3)(n-3k+3)$. Thus, we have 
\begin{align*}
    A_1\quad &\le \textcolor{orange}{\max\{(k-1-t)(2t(n-3k+2)+8),3(k-1-t)(n-3k+2)\}}   \\
     &\quad +\textcolor{blue}{t(3kn-12k+9)}
     +    \textcolor{purple}{3t(k-1-t)((n-3k+3)+3)}
     +   \textcolor{red}{6\binom{k-1-t}{2}(n-3k+3)}  \\
     &\quad +(3k-3)(n-3k+3) +   \textcolor{green}{16\binom{t}{2}} +   \binom{3k-3}{3}:=A_2.
\end{align*}
For $a,b\in\mathbb{N}$, obviously $\binom{a}{b}\le\frac{a^b}{b!}$. 

If $t\ge2$, then by $(k-1-t)\le(k-t)$, $k \ge 3$ and $t\le k$, we have
\begin{align*}
    A_2\quad&\le t^2(-2n+6k-5)+t(2kn+3k^2-8k+1)\\
    &+(3k^2n+(3k-3)n-\frac{9}{2}k^3-\frac{27}{2}k^2+\frac{79}{2}k-\frac{27}{2})   \\
        &\le -2nt^2+2knt+k^2(6k-5)+k(3k^2-8k+1)+\\
        &\quad \{(3k^2+3k-3)n-\frac{9}{2}k^3-\frac{27}{2}k^2+\frac{79}{2}k\}\\
        &= -2nt^2+2knt+(3k^2+3k-3)n+(6+3-\frac{9}{2})k^3-(5+8+\frac{27}{2})k^2+(1+\frac{79}{2})k\\
        &\le -2nt^2+2knt+(3k^2+3k-3)n+\frac{9}{2}k^3-13k^2-\frac{27}{2}k^2+\frac{27}{2}k^2 ~(\mbox{as}~k\geq 3)\\
        &= -2nt^2+2knt+(3k^2+3k-3)n+\frac{9}{2}k^3-13k^2:=f(t)=B_1.
\end{align*}
We claim
$f(t)$ is maximal when $t=\frac{k}{2}$, and so,
\begin{align*}
    B_1\le \frac{7}{2}k^2n+3kn+\frac{9}{2}k^3-13k^2:=g(k)=B_2.
\end{align*}
Set $$n_0=\frac{41k^2+36k+\sqrt{(41k^2+36k)^2+4k(54k^3-156k^2)}}{2k}.$$ By computing, we have $$n\ge\frac{85}{2}k+48>\frac{41k^2+36k+(44k^2+36k)}{2k}\ge n_0.$$ By Lemma \ref{Lem: delta^c > (n+k)/2}, we have
\begin{align*}
    12\cdot(rt(G)- B_2)&\geq kn^2+k^2n-\{(42k^2+36k)n+54k^3-156k^2\}\\
        &=kn^2+(-41k^2-36k)n-54k^3+156k^2\\
        &\ge kn^2+(-41k^2-36k)n-54k^3+156k^2 ~(\mbox{as}~n > n_0) \\
        &> kn_0^2+(-41k^2-36k)n_0-54k^3+156k^2\\
        &=0,
\end{align*}
which implies $rt(G)>rt(G)$, a contradiction.

If $t\le 1$, then
\begin{align*}
    A_2\quad &\le -t^2+(-3n+9k^2-3k+3)t+3k^2n+(6k-3)n-\frac{9}{2}k^3-\frac{45}{2}k^2+\frac{75}{2}k-\frac{27}{2} \\
    \quad&\le-t^2+(-3n+3)t+9k^2-3k+(3k^2+6k)n-\frac{9}{2}k^3-\frac{45}{2}k^2+\frac{75}{2}k ~(\mbox{as}~t\le 1)\\
    \quad&\le-t^2+(-3n+3)t+9k^2+(3k^2+6k)n-\frac{9}{2}k^3-\frac{45}{2}k^2+\frac{23}{2}k^2 ~(\mbox{as}~k\geq 3)\\
    &=-t^2+(-3n+3)t+(3k^2+6k)n-\frac{9}{2}k^3-2k^2:=h(t)=C_1
\end{align*}
Then, $h(t)$ is maximal when $t=0$, 
\begin{align*}
    C_1 \quad&\le (3k^2+6k)n-\frac{9}{2}k^3-2k^2:=C_2
\end{align*}
Set
$$n_1=\frac{35k^2+72k+\sqrt{(35k^2+72k)^2-4k(54k^3+24k^2)}}{2k}.$$
By computing, we have
$$n\ge \frac{85}{2}k+48>\frac{35k^2+72k+(32k^2+73k)}{2k}> n_1.$$
By Lemma \ref{Lem: delta^c > (n+k)/2}, we have
\begin{align*}
    12(rt(G)-C_2)&\ge kn^2+k^2n-(36k^2+72k)n+54k^3+24k^2\\
    &>kn_1^2-(35k^2+72k)n_1+54k^3+24k^2\\
    &=0,
\end{align*}
a contradiction.
Thus, $G$ contains a rainbow $C_3$ vertex-disjoint with each $T_i$ ($i\in[k-1]$). The proof is complete.
$\hfill\square$




\bibliographystyle{unsrt}

\end{document}